\newtheorem{Theorem}{Theorem}[section]
\theoremstyle{definition}
\newtheorem{Corollary}[Theorem]{Corollary}
\newtheorem{Lemma}[Theorem]{Lemma}
\newtheorem{Proposition}[Theorem]{Proposition}
\newtheorem{Question}[Theorem]{Question}
\newtheorem{Definition}[Theorem]{Definition}
\newtheorem{Example}[Theorem]{Example}
\theoremstyle{definition}
\begin{document}
\baselineskip 17pt

\title{On weakly $\sigma$-permutable subgroups of finite groups\thanks{Research was supported by the NNSF  of China (11371335) and Wu Wen-Tsun Key Laboratory of Mathematics of Chinese Academy of Sciences.}}

\author{Chi Zhang, Zhenfeng Wu, W. Guo\thanks{Corresponding author}\\
{\small  Department of Mathematics, University of Science and Technology of China}\\
{\small Hefei, 230026, P.R. China}\\
{\small E-mail:zcqxj32@mail.ustc.edu.cn, zhfwu@mail.ustc.edu.cn, wbguo@ustc.edu.cn}\\ \\}

\date{}
\maketitle

\begin{abstract}
 Let $G$ be a finite group
 and $\sigma=\{\sigma_{i}|i\in I\}$ be a partition of the set of all primes $\mathbb{P}$.
 A set $\mathcal {H}$ of subgroups of $G$ with $1 \in \mathcal {H}$ is said to be a complete Hall $\sigma$-set of $G$ if every non-identity member of $\mathcal {H}$ is a Hall $\sigma_{i}$-subgroup of $G$.
 A subgroup $H$ of $G$ is said to be $\sigma$-permutable if $G$ possesses a complete Hall $\sigma$-set $\mathcal {H}$
such that $HA^{x}=A^{x}H$ for all $A\in \mathcal {H}$ and all $x\in G$.
 We say that a subgroup $H$ of $G$ is weakly $\sigma$-permutable in $G$
 if there exists a $\sigma$-subnormal subgroup $T$ of $G$
 such that $G=HT$ and $H\cap T\leq H_{\sigma G}$,
 where $H_{\sigma G}$ is the subgroup of $H$ generated by all those subgroups of $H$ which are $\sigma$-permutable in $G$.
 By using this new notion,
 we establish some new criterias for a group $G$ to be a $\sigma$-soluble and supersoluble, and also we give the conditions under which a normal subgroup of $G$ is  hypercyclically embedded.
\end{abstract}

\let\thefootnoteorig\thefootnote
\renewcommand{\thefootnote}{\empty}

\footnotetext{Keywords: Finite group; $\sigma$-permutable subgroup; weakly $\sigma$-permutable subgroup; $\sigma$-soluble; supersoluble; hypercyclically embedded}

\footnotetext{Mathematics Subject Classification (2010): 20D10, 20D15, 20D20} \let\thefootnote\thefootnoteorig

\section{Introduction}
Throughout this paper, all groups  are finite and $G$ always denotes a  group.
Moreover, $n$ is an integer, $\mathbb{P}$ is the set of all primes.
The symbol $\pi(n)$ denotes the set of all primes dividing $n$ and $\pi(G)=\pi(|G|)$,
the set of all primes dividing the order of $G$.
$|G|_{p}$ denotes the order of the Sylow $p$-subgroup of $G$.

In what follows, $\sigma=\{\sigma_{i}|i\in I\}$ is some partition of $\mathbb{P}$,
that is, $\mathbb{P}=\bigcup_{i\in I}\sigma_{i}$ and $\sigma_{i}\cap \sigma_{j}=\emptyset$ for all $i\neq j$.
$\Pi$ is always supposed to be a non-empty subset of $\sigma$ and $\Pi^{'}=\sigma\backslash \Pi$.
We write $\sigma(n)=\{\sigma_{i}|\sigma_{i}\cap \pi(n)\neq\emptyset\}$ and $\sigma(G)=\sigma(|G|)$.

Following \cite{AN1, WS1}, $G$ is said to be $\sigma$-primary  if $G = 1$ or $|\sigma(G)| = 1$;
$n$ is said to be a $\Pi$-number if $\pi(n)\subseteq\bigcup_{\sigma_{i}\in \Pi}\sigma_{i}$;
a subgroup $H$ of $G$ is called a $\Pi$-subgroup of $G$ if $|H|$ is a $\Pi$-number;
a subgroup $H$ of $G$ is called a Hall $\Pi$-subgroup of $G$ if $H$ is a $\Pi$-subgroup of $G$ and $|G:H|$ is a $\Pi^{'}$-number.
A set $\mathcal {H}$ of subgroups of $G$ with $1 \in \mathcal {H}$ is said to be a complete Hall $\Pi$-set of $G$ if every non-identity member of $\mathcal {H}$ is a Hall $\sigma_{i}$-subgroup of $G$ for some $\sigma_{i}\in \Pi$ and $\mathcal {H}$ contains exact one Hall $\sigma_{i}$-subgroup of $G$ for every $\sigma_{i}\in \Pi\cap \sigma(G)$.
In particular,
when $\Pi=\sigma$,
we call $\mathcal {H}$ a complete Hall $\sigma$-set of $G$.
$G$ is said to be {\sl $\Pi$-full} if $G$ possesses a complete Hall $\Pi$-set;
{\sl a $\Pi$-full group of Sylow type} if every subgroup of $G$ is a $D_{\sigma_{i}}$-group
for all $\sigma_{i}\in \Pi\cap \sigma(G)$.
In particular, $G$ is said to be {\sl $\sigma$-full} (or {\sl $\sigma$-group}) if $G$ possesses a complete Hall $\sigma$-set;
{\sl a $\sigma$-full group of Sylow type} if every subgroup of $G$ is a $D_{\sigma_{i}}$-group for all $\sigma_{i}\in \sigma(G)$.
A subgroup $H$ of $G$ is called \cite{AN1}  $\sigma$-subnormal in $G$ if there is a subgroup chain
$H=H_0\leq H_1\leq\cdots \leq H_t=G$ such that either $H_{i-1}$ is normal in $H_{i}$
or $H_{i}/(H_{i-1})_{H_{i}}$ is $\sigma$-primary for all $i=1,2,\cdots,t$.

In the past 20 years, a large number of researches have involved finding and applying some generalized complemented subgroups.
For example, a subgroup $H$ of $G$ is said to be $c$-normal \cite{WY1} in $G$
if $G$ has a normal subgroup $T$ of $G$
such that $G=HT$ and $H\cap T\leq H_{G}$,
where $H_{G}$ is the normal core of $H$.
A subgroup $H$ of $G$ is said to be weakly $s$-permutable \cite{AN3} in $G$
if $G$ has a subnormal subgroup $T$
such that $G=HT$ and $H\cap T\leq H_{sG}$,
where $H_{sG}$ is the largest $s$-permutable subgroup of $G$ contained in $H$
(note that a subgroup $H$ of $G$ is said to be $s$-permutable in $G$
if $HP=PH$ for any Sylow subgroup $P$ of $G$).
A subgroup $H$ of $G$ is said to be $\sigma$-permutable \cite{AN1} in $G$ if $G$ possesses a complete Hall $\sigma$-set $\mathcal {H}$
such that $HA^{x}=A^{x}H$ for all $A\in \mathcal {H}$ and all $x\in G$.
By using the above special supplemented subgroups and other generalized complemented subgroups,
people have obtained a series of interesting results (see \cite{AN1, WY1, AN3, AM, AB3, AB2, W, WS4, BL, WH} and so on).
Now, we consider the following new generalized supplemented subgroup:

\begin{Definition}
A subgroup $H$ of $G$ is said to be weakly $\sigma$-permutable in $G$
if there exists a $\sigma$-subnormal subgroup $T$ of $G$
such that $G=HT$ and $H\cap T\leq H_{\sigma G}$,
where $H_{\sigma G}$ is the subgroup of $H$ generated by all those subgroups of $H$ which are $\sigma$-permutable in $G$.
\end{Definition}

Following \cite{AN2}, $H_{\sigma G}$ is called $\sigma$-core of $H$.

It is clear that every $c$-normal subgroup,  every $s$-permutable subgroup, every weakly $s$-permutable subgroup and every $\sigma$-permutable subgroup of $G$ are all weakly $\sigma$-permutable in $G$.
However, the following example shows that the converse is not true.

\begin{Example}\label{e1}
 Let $G=(C_{7}\rtimes C_{3})\times A_{5}$,
where $C_{7}\rtimes C_{3}$ is a non-abelian group of order 21
and $A_{5}$ is the alternating group of degree 5.
Let $B$ be a subgroup of $A_{5}$ of order 12
and $A$ be a Sylow $5$-subgroup of $A_{5}$.
Let $\sigma=\{\sigma_{1},\sigma_{2}\}$,
where $\sigma_{1}=\{2,3,5\}$ and $\sigma_{2}=\{2,3,5\}^{'}$.
Then $B$ is weakly $\sigma$-permutable in $G$.
In fact, let $T=(C_{7}\rtimes C_{3})\times A$,
then $C_{7}\rtimes C_{3}\leq T_{G}$
and $|G:C_{7}\rtimes C_{3}|=5\cdot 2^2\cdot 3$ is a $\sigma_{1}$-number.
Hence $G/T_{G}$ is a $\sigma_{1}$-group,
and so $T$ is $\sigma$-subnormal in $G$.
Since $T\cap B=1$ and $G=BT$,
which means that $B$ is weakly $\sigma$-permutable in $G$.
But $B$ is neither weakly $s$-permutable in $G$ nor $c$-normal in $G$.
In fact, if there exists a subnormal subgroup $K$ of $G$
such that $G=BK$ and $B\cap K\leq B_{sG}$.
Then $B_{sG}$ is subnormal in $G$ by \cite[Lemma 2.8]{AN3},
and so is subnormal in $A_{5}$ by \cite[A, (14.1)]{Doerk} .
It follows that $B_{sG}=1$ for $A_{5}$ is a simple group.
Hence $|G:K|=|B|=2^2\cdot 3$.
But since $1<A_{5}<A_{5}C_{7}<G$ is a chief series of $G$ and also a composition series of $G$, $G$ has no  subnormal subgroup $K$ whose index is $2^2\cdot 3$ by Jordan-H$\ddot{o}$lder theorem.
Therefore $B$ is not weakly $s$-permutable in $G$. Consequently, $B$ is neither $s$-permutable nor $c$-normal in $G$.

Now let $H=BC_{3}$.
Then $H$ is weakly $\sigma$-permutable in $G$ but not $\sigma$-permutable in $G$,
Indeed, let $T=C_{7}A_{5}$.
Then $G=HT$,
$T$ is normal in $G$ and $H\cap T=B$.
It is easy to see that $\mathcal {H}=\{A_{5}C_{3}, C_{7}\}$ is a complete Hall $\sigma$-set of $G$.
Since $H_{\sigma G}$ is $\sigma$-subnormal in $G$ by Lemma \ref{permutable} (4) below and {\cite[Theorem B]{AN1}},
$H_{\sigma G}\leq O_{\sigma_{1}}(G)$ by Lemma \ref{subnormal}(8) below.
Clearly, $O_{\sigma_{1}}(G)\leq C_{G}(O_{\sigma_{2}}(G))=C_{G}(C_{7})=C_{7}A_{5}$.
Hence $H_{\sigma G}\leq C_{7}A_{5}$.
But since $B(A_{5}C_{3})^{x}=BA_{5}C_{3}^{x}=A_{5}C_{3}^{x}=C_{3}^{x}A_{5}=(A_{5}C_{3})^{x}B$ for all $x\in G$,
$B$ is $\sigma$-permutable in $G$ for $C_{7}\unlhd G$.
Hence $B\leq H_{\sigma G}\leq C_{7}A_{5}$,
which implies that $B=H_{\sigma G}$.
So $H$ is weakly $\sigma$-permutable in $G$,
But $H$ is not $\sigma$-permutable in $G$ for $H_{\sigma G}=B<H$.
 \end{Example}

Following \cite{AN1}, $G$ is called:
(i) $\sigma$-soluble if every chief factor of $G$ is $\sigma$-primary;
(ii) $\sigma$-nilpotent if $H/K\rtimes (G/C_{G}(H/K))$ is $\sigma$-primary
for every chief factor $H/K$ of $G$.

The result in \cite{AN1, AN3, WS2, SS, WY1} are the motivation for the following:

\begin{Question}
Let $G$ be a $\sigma$-full group of Sylow type.
What is the structure of $G$ provided that some subgroups are weakly $\sigma$-permutable in $G$?
\end{Question}

In this paper, we obtain the following results.

\begin{Theorem}\label{Th1}
Let $G$ be a $\sigma$-full group of Sylow type
and every Hall $\sigma_{i}$-subgroup of $G$ is weakly $\sigma$-permutable in $G$ for every $\sigma_{i}\in \sigma(G)$.
Then $G$ is $\sigma$-soluble.
\end{Theorem}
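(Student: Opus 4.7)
The plan is to argue by contradiction by letting $G$ be a counterexample of minimum order. First I would verify, by the standard quotient arguments (that Hall $\sigma_i$-subgroups, $\sigma$-subnormal subgroups, and the $\sigma$-core all behave well modulo a normal subgroup), that the hypothesis of the theorem is inherited by every proper quotient $G/N$; by the minimality of $G$, each such $G/N$ is then $\sigma$-soluble.

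Next, using that the class of $\sigma$-soluble groups is closed under extensions and subdirect products, I would deduce that $G$ has a unique minimal normal subgroup $R$ (otherwise $G$ would embed into the direct product of two proper $\sigma$-soluble quotients) and that $R$ is not $\sigma$-primary (otherwise $R$ is $\sigma$-soluble and, together with $G/R$, forces $G$ to be $\sigma$-soluble). Hence $R = S_1 \times \cdots \times S_k$ is a direct product of pairwise isomorphic non-abelian simple groups with $|\sigma(R)| \geq 2$, and $O_{\sigma_i}(G) = 1$ for every $\sigma_i$, because any nontrivial $\sigma_i$-primary normal subgroup would contain the unique minimal normal subgroup $R$.

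Now fix any $\sigma_i \in \sigma(R)$, and let $H_i$ be a Hall $\sigma_i$-subgroup of $G$. By hypothesis there is a $\sigma$-subnormal subgroup $T_i$ of $G$ with $G = H_i T_i$ and $H_i \cap T_i \leq (H_i)_{\sigma G}$. The $\sigma$-core $(H_i)_{\sigma G}$ is $\sigma$-subnormal in $G$ (by Lemma~\ref{permutable}(4) together with \cite[Theorem~B]{AN1}) and is a $\sigma_i$-subgroup, so by Lemma~\ref{subnormal}(8) it lies inside $O_{\sigma_i}(G) = 1$. Therefore $H_i \cap T_i = 1$, so $T_i$ is a complement of $H_i$ in $G$ and hence a Hall $\sigma_i'$-subgroup of $G$ that is $\sigma$-subnormal in $G$.

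To finish, set $D := R \cap T_i$. Since $|R:D|$ divides $|G:T_i| = |H_i|$, a $\sigma_i$-number, $D$ is a Hall $\sigma_i'$-subgroup of $R$, and $D$ is $\sigma$-subnormal in $R$ as the intersection of a $\sigma$-subnormal subgroup of $G$ with the normal subgroup $R$. The key structural step --- and what I expect to be the main obstacle --- is to show that every $\sigma$-subnormal subgroup of $R$ is a subproduct $\prod_{j \in J} S_j$; I would prove this by induction on the length of a $\sigma$-subnormal chain, noting that at each top step either the predecessor is already normal in $R$ (and is therefore a subproduct of $R$) or the quotient $R/(D_{t-1})_R$ is $\sigma$-primary, which is impossible for a nontrivial quotient of $R$ since every $S_j$ satisfies $|\sigma(S_j)| = |\sigma(R)| \geq 2$. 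Applied to $D$, any nontrivial subproduct contains some $S_j$ and therefore has nontrivial $\sigma_i$-part, contradicting that $D$ is a $\sigma_i'$-group. Hence $D = 1$, so $|R|$ divides $|H_i|$, forcing $R$ to be $\sigma_i$-primary and contradicting $|\sigma(R)| \geq 2$.
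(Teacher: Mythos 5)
Your proposal is correct, but after the common first step it diverges from the paper's argument. Both proofs begin the same way: minimal counterexample, and the hypothesis passes to proper quotients (note that even this step is not purely formal, since a Hall $\sigma_i$-subgroup $H_i$ need not be coprime to $N$; the paper justifies $(H_iN/N)\cap(TN/N)\leq (H_iN/N)_{\sigma(G/N)}$ via the coprime-index factorization $H_iN\cap T=(H_i\cap T)(N\cap T)$ from \cite[A, Lemma 1.6]{Doerk}, which is what your ``standard quotient arguments'' must amount to). From there the paper shows that a minimal normal subgroup $R$ also inherits the hypothesis (writing a Hall $\sigma_i$-subgroup of $R$ as $H_i\cap R$ and repeating the same coprimality trick), treats the simple case separately, and concludes by extension-closure of $\mathfrak{S}_{\sigma}$: $R$ and $G/R$ are both $\sigma$-soluble by minimality. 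You instead exploit the structure of the unique minimal normal subgroup: since $O_{\sigma_i}(G)=1$ forces $(H_i)_{\sigma G}=1$ (Lemma \ref{permutable}(4) plus Lemma \ref{subnormal}(8)), the supplement $T_i$ is a $\sigma$-subnormal Hall $\sigma_i'$-subgroup, and $R\cap T_i$ is a $\sigma$-subnormal Hall $\sigma_i'$-subgroup of $R=S_1\times\cdots\times S_k$; your auxiliary claim that $\sigma$-subnormal subgroups of such a product (with each $|\sigma(S_j)|\geq 2$) are subproducts is correct and your chain-length induction proves it, since at the top step the quotient $R/(D_{t-1})_R$ is a product of copies of $S$ and hence cannot be nontrivially $\sigma$-primary. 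What each approach buys: the paper's route is shorter and entirely lemma-driven (inheritance plus closure properties, with the simple case handled as a quick separate claim), while yours needs the extra structural lemma on $\sigma$-subnormal subgroups of semisimple groups but in exchange avoids verifying the hypothesis for $R$, absorbs the simple case automatically, and pinpoints the concrete obstruction (a $\sigma$-subnormal Hall $\sigma_i'$-subgroup of a non-$\sigma$-primary minimal normal subgroup must vanish).
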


\begin{Theorem}\label{Th3}
Let $G$ be a $\sigma$-full group of Sylow type
and $\mathcal {H}=\{1,W_{1}, W_{2}, \cdots, W_{t}\}$ be a complete Hall $\sigma$-set of $G$
such that $W_{i}$ is a nilpotent $\sigma_{i}$-subgroup for all $i=1,\cdots,t$.
Suppose that the maximal subgroups of any non-cyclic $W_{i}$ is weakly $\sigma$-permutable in $G$.
Then $G$ is supersoluble.
\end{Theorem}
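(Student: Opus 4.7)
The plan is to argue by minimal counterexample, following the standard template for supersolubility results of this flavor. Let $G$ be a counterexample of minimal order. First I would verify that the hypothesis is inherited by quotients $G/N$: Hall $\sigma_i$-subgroups of $G/N$ are images of the $W_i$, hence nilpotent, and maximal subgroups of $W_iN/N$ lift to subgroups $MN$ with $M$ maximal in $W_i$ (when the image is non-cyclic); then I would invoke the standard descent properties of weak $\sigma$-permutability (analogous to the quotient lemma for weakly $s$-permutable subgroups, using Lemma~\ref{permutable} and the fact that $\sigma$-subnormality passes to quotients by Lemma~\ref{subnormal}). By minimality, $G/N$ is supersoluble for every non-trivial normal $N$, so I would conclude that $G$ has a unique minimal normal subgroup $N$, that $\Phi(G)\cap N=1$, and that $N \not\leq \Phi(G)$.

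Next I would establish that $G$ is soluble, and in fact that $N$ is an elementary abelian $p$-group. If every $W_i$ is cyclic, then every Sylow subgroup of $G$ is cyclic and $G$ is supersoluble by Zassenhaus' theorem, a contradiction. So some $W_i$ is non-cyclic. Using the weakly $\sigma$-permutable maximal subgroups of this $W_i$, combined with the fact that $W_i$ is nilpotent (hence a direct product of Sylow subgroups and its maximal subgroups have prime index), I would show that $G$ is $\sigma$-soluble (invoking Theorem~\ref{Th1} in spirit, or deriving directly that $G$ possesses a normal Hall $\sigma_i$- or $\sigma_i'$-subgroup). Once $G$ is $\sigma$-soluble, the unique minimal normal $N$ is $\sigma$-primary, and a short additional argument (using that $W_i$ is nilpotent together with the complement-like structure $G=N \rtimes H$ for some maximal $H$) shows $N$ is an elementary abelian $p$-group, $N \leq W_i$ with $p \in \sigma_i$.

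The heart of the argument is to show $|N|=p$, which together with $G/N$ supersoluble finishes the proof. Suppose $|N|>p$. Since $N$ is elementary abelian of rank $\geq 2$, $W_i$ is non-cyclic, so the hypothesis applies to its maximal subgroups. I would choose a maximal subgroup $M$ of $W_i$ with $M \cap N$ a maximal subgroup of $N$ (possible because $N \leq W_i$ and $N$ is non-cyclic abelian, so some hyperplane of $N$ extends to a maximal subgroup of $W_i$ via the direct product decomposition of the nilpotent group $W_i$). By hypothesis there is a $\sigma$-subnormal $T \leq G$ with $G=MT$ and $M\cap T \leq M_{\sigma G}$. Since $M_{\sigma G}$ is $\sigma$-permutable, hence $\sigma$-subnormal, it lies in a proper normal subgroup structure controlled by Lemma~\ref{subnormal}; I would show $N \not\leq M_{\sigma G}$ and $N \not\leq T$, then derive $N \cap T = 1$ or $N \cap T = N \cap M$, and by minimality of $N$ together with $G = MT$ get a contradiction with $|N|>p$.

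The main obstacle I expect is the last paragraph: carefully choosing the maximal subgroup $M$ of $W_i$ (which requires exploiting the nilpotent structure of $W_i$ so that $M\cap N$ is a hyperplane of $N$) and then extracting from the $\sigma$-subnormal supplement $T$ the information that $N$ must have order $p$. The rest is a careful bookkeeping application of the reduction lemmas on $\sigma$-subnormality, $\sigma$-permutability, and the $\sigma$-core $H_{\sigma G}$; the solubility/$\sigma$-solubility step and the quotient-closure step are routine given Theorem~\ref{Th1} and the basic lemmas cited in the paper.
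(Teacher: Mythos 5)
The main gap is your solubility step. You propose to get $\sigma$-solubility by ``invoking Theorem~\ref{Th1} in spirit,'' but Theorem~\ref{Th1} assumes that the Hall $\sigma_i$-subgroups \emph{themselves} are weakly $\sigma$-permutable, and that is not available here: the hypothesis of Theorem~\ref{Th3} only concerns \emph{maximal subgroups} of the non-cyclic $W_i$, and weak $\sigma$-permutability of maximal subgroups of some $W_i$ does not yield it for $W_i$ or for the other Hall subgroups. In the paper this step is not routine at all: it is a separate Proposition~\ref{prop1}, stated for the Hall subgroup $W_1$ containing the \emph{smallest} prime, whose proof (after reducing to $O_{\sigma_1}(G)=O_{\sigma_1'}(G)=1$) uses Feit--Thompson to place $2\in\pi(W_1)$, then plays off two maximal subgroups of $W_1$ of indices $2$ and $q$ (their trivial $\sigma$-cores force $|W_1\cap T_i|$ to be $q$ resp.\ $2$, while $O^{\sigma_1}(T_i)=O^{\sigma_1}(G)$ pushes the unique minimal normal subgroup into both supplements) to conclude that $W_1$ is a $2$-group, and finally uses a cyclic-Sylow-$2$/$2$-nilpotence argument \cite[IV, Theorem 2.8]{HU}. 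Moreover, your chosen non-cyclic $W_i$ need not contain the smallest prime; the correct case split is on the $W_1$ containing the smallest prime $q$: if it is cyclic one gets $q$-nilpotence from the cyclic Sylow $q$-subgroup (plus Feit--Thompson), and only if it is non-cyclic does the hypothesis feed Proposition~\ref{prop1}. As written, your outline simply asserts the conclusion of the hardest part of the proof.

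The final step is also thinner than it needs to be, as you acknowledge. What actually makes it work in the paper is the structural information available after the quotient reduction: $R$ is the unique minimal normal subgroup, $\Phi(G)=1$, $C_G(R)=R=F(G)=O_p(G)$ and $|R|>p$; since $C_G(R)=R$ and the Hall subgroup containing $R$ is nilpotent, that $W_1$ is a $p$-group (so in particular automatically non-cyclic), and $R\nleq\Phi(W_1)$ produces a maximal subgroup $V$ of $W_1$ with $W_1=RV$, so $E=R\cap V$ is a hyperplane of $R$ normal in $W_1$. Then, for the supplement $T$, one needs $R\le T$, which comes from $|G:T|$ being a $p$-number, hence $O^{\sigma_1}(T)=O^{\sigma_1}(G)$ by Lemma~\ref{subnormal}(7), hence $T_G\neq 1$ and $R\le T_G$; and one needs $V_{\sigma G}\le O_{\sigma_1}(G)=R$ by Lemmas~\ref{permutable}(4) and \ref{subnormal}(8). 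These give $E\le V\cap T\le V_{\sigma G}\le R$, so $V_{\sigma G}=E$ (the case $V_{\sigma G}=R$ forces $R\le V$), and then Lemma~\ref{normalizes} gives $O^{\sigma_1}(G)\le N_G(E)$, which together with $E\unlhd W_1$ yields $E\unlhd G$, contradicting minimality of $R$. Your sketch (``derive $N\cap T=1$ or $N\cap T=N\cap M$ \dots get a contradiction'') omits exactly these ingredients --- the identification of $W_1$ as a $p$-group via $C_G(R)=R$, the containment $R\le T$, the bound $V_{\sigma G}\le O_p(G)$, and the normalizer argument that promotes the hyperplane to a normal subgroup --- and without them the contradiction does not follow.
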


The following results now follow immediately from Theorems \ref{Th1} and \ref{Th3}.

\begin{Corollary}
If every sylow subgroup is weakly $s$-permutable in $G$,
then $G$ is soluble.
\end{Corollary}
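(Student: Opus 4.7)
The plan is to specialize Theorem \ref{Th1} to the finest partition of $\mathbb{P}$, namely $\sigma=\{\{p\}:p\in\mathbb{P}\}$, and then observe that for this choice of $\sigma$ each $\sigma$-theoretic notion collapses to its classical counterpart.

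First I would verify the translation dictionary for the finest partition. A complete Hall $\sigma$-set is then simply a complete set of Sylow subgroups of $G$, so $G$ is automatically a $\sigma$-full group of Sylow type by classical Sylow theory. A subgroup $H$ is $\sigma$-permutable in $G$ exactly when $H$ permutes with every Sylow subgroup of $G$, i.e.\ $H$ is $s$-permutable in $G$; hence $H_{\sigma G}$ coincides with the $s$-permutable core $H_{sG}$. Since a chief factor of $G$ is $\sigma$-primary precisely when it is a $p$-group, $\sigma$-subnormality coincides with ordinary subnormality (through normal steps or $p$-primary quotients), and thus being weakly $\sigma$-permutable becomes being weakly $s$-permutable in the sense of Skiba's definition recalled in the introduction.

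Next I would apply the hypothesis: every Sylow subgroup of $G$ is weakly $s$-permutable in $G$, which, by the above dictionary, is the same as saying that every Hall $\sigma_i$-subgroup of $G$ is weakly $\sigma$-permutable in $G$ for every $\sigma_i\in\sigma(G)$. Theorem \ref{Th1} then gives that $G$ is $\sigma$-soluble. Finally, I would conclude by unpacking $\sigma$-solubility once more: for the singleton partition, every chief factor is $\sigma$-primary iff every chief factor is an elementary abelian $p$-group for some prime $p$, which is just the classical definition of solubility.

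The argument is essentially a translation exercise, so the only place where care is needed is checking that the two notions of subnormality agree under the finest partition (so that ``weakly $s$-permutable'' is genuinely the same as ``weakly $\sigma$-permutable''); this is the one step where one should not be sloppy, but it reduces to the observation that a subgroup chain with each factor either normal or $p$-primary modulo core can be refined to an ordinary subnormal chain, and vice versa. With that in hand, the corollary is a direct consequence of Theorem \ref{Th1} and requires no independent argument.
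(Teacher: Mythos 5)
Your proof is correct and matches the paper's intended argument: the corollary is stated as an immediate consequence of Theorem \ref{Th1}, obtained exactly by specializing to the finest partition $\sigma=\{\{p\}:p\in\mathbb{P}\}$, under which $\sigma$-permutability, $\sigma$-subnormality, weak $\sigma$-permutability and $\sigma$-solubility reduce to their classical counterparts. Your extra care about the equivalence of $\sigma$-subnormality and subnormality for this partition is exactly the right point to check (and in fact only the easy direction, subnormal $\Rightarrow$ $\sigma$-subnormal, is needed to apply the theorem).
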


\begin{Corollary} \label{cor3}\textup{(See Huppert \cite[Chap. VI, Theorem 10.3]{HU})}
If every Sylow subgroup of $G$ is cyclic, then  $G$ is supersoluble .
\end{Corollary}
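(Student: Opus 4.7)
\textbf{Proof proposal for Corollary \ref{cor3}.}
The plan is to deduce this as the ``vacuous'' case of Theorem \ref{Th3} by choosing the finest possible partition of $\mathbb{P}$. Specifically, I would take $\sigma = \{\{p\} : p \in \mathbb{P}\}$, so that each class $\sigma_i$ consists of a single prime. Under this choice, the Hall $\sigma_i$-subgroups of any group coincide with its Sylow $p$-subgroups, and the $D_{\sigma_i}$ property reduces to Sylow's theorem. Hence every finite group is automatically a $\sigma$-full group of Sylow type, and the natural complete Hall $\sigma$-set is $\mathcal{H} = \{1\} \cup \{P_p : p \in \pi(G)\}$, where $P_p$ is a Sylow $p$-subgroup of $G$.

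Next I would verify the remaining hypotheses of Theorem \ref{Th3} for this $\sigma$ and this $\mathcal{H}$. Each $W_i = P_{p_i}$ is a Sylow $p_i$-subgroup, and by hypothesis it is cyclic; in particular it is nilpotent, as required. Moreover, since \emph{every} Sylow subgroup of $G$ is cyclic, there are no non-cyclic members $W_i$ in the set $\mathcal{H}$. Consequently the assumption ``the maximal subgroups of any non-cyclic $W_i$ are weakly $\sigma$-permutable in $G$'' is satisfied vacuously — there is simply nothing to check.

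All hypotheses of Theorem \ref{Th3} thus hold, so that theorem directly delivers the conclusion that $G$ is supersoluble. I do not anticipate any substantive obstacle here: the entire argument is a matter of choosing the right partition so that ``Hall $\sigma$-set'' specializes to ``set of Sylow subgroups'' and the cyclicity assumption makes the maximal-subgroup hypothesis vacuous. The only thing to be careful about is to state explicitly that the chosen $\sigma$ makes every finite group $\sigma$-full of Sylow type, and that the non-cyclic clause in Theorem \ref{Th3} admits the vacuous case so that no weak $\sigma$-permutability actually needs to be verified.
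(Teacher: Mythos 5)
Your proposal is correct and matches the paper's intent: the authors state that Corollary \ref{cor3} follows immediately from Theorem \ref{Th3}, and the intended specialization is exactly yours, namely taking $\sigma=\{\{p\}:p\in\mathbb{P}\}$ so that the complete Hall $\sigma$-set is a set of Sylow subgroups, every group is $\sigma$-full of Sylow type, and the hypothesis on maximal subgroups of non-cyclic $W_i$ holds vacuously since all Sylow subgroups are cyclic (hence nilpotent).
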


\begin{Corollary}\textup{(See Miao \cite[Corollary 3.4]{LM})}
If all maximal subgroups of every Sylow subgroup of $G$ are weakly $s$-permutable in $G$,
then $G$ is supersoluble.
\end{Corollary}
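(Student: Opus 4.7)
The corollary is advertised as an immediate consequence of Theorem \ref{Th3}, so the plan is to specialize $\sigma$ to the finest partition and translate the classical hypothesis into the $\sigma$-language.

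First I would choose $\sigma^{0}=\{\{p\}\mid p\in\mathbb{P}\}$, the partition of $\mathbb{P}$ into singletons, and check the resulting dictionary: a group is $\sigma^{0}$-primary iff it is a $p$-group for some prime $p$; every finite group is $\sigma^{0}$-full of Sylow type, by Sylow's theorem; a complete Hall $\sigma^{0}$-set of $G$ is exactly a Sylow system $\{1,P_{p_{1}},\dots,P_{p_{t}}\}$ with $\pi(G)=\{p_{1},\dots,p_{t}\}$, and each $P_{p_{i}}$ is a $p$-group and hence nilpotent; a subgroup is $\sigma^{0}$-permutable in $G$ iff it is $s$-permutable in $G$; and a subgroup is $\sigma^{0}$-subnormal in $G$ iff it is subnormal in $G$ (the non-obvious direction uses that any subgroup of a $p$-group is subnormal, which lets one refine each $\sigma^{0}$-primary step $H_{i}/(H_{i-1})_{H_{i}}$ into a chain of normal steps).

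Next I would identify the $\sigma$-core with the classical $s$-core: the subgroup $H_{\sigma^{0}G}$ generated by all $\sigma^{0}$-permutable subgroups of $G$ contained in $H$ coincides with $H_{sG}$, the largest $s$-permutable subgroup of $G$ contained in $H$. This uses the standard closure of the family of $s$-permutable subgroups of $G$ under joins, so that the subgroup they generate is itself $s$-permutable. With these identifications, \emph{weakly $\sigma^{0}$-permutable} becomes exactly \emph{weakly $s$-permutable} in the sense of the introduction. Consequently, the hypothesis that all maximal subgroups of every Sylow subgroup of $G$ are weakly $s$-permutable supplies more than what Theorem \ref{Th3} demands, which only requires the condition on the maximal subgroups of the non-cyclic members of the chosen Hall set. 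Applying Theorem \ref{Th3} then delivers supersolubility of $G$.

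The main obstacle here is not depth but careful bookkeeping of the dictionary between the $\sigma^{0}$-language and the classical prime-by-prime language. The only entry requiring more than a definitional unravelling is the identification $H_{\sigma^{0}G}=H_{sG}$, which rests on the join-closure of $s$-permutable subgroups, a standard result one can cite rather than reprove.
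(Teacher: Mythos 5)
Your proposal is correct and matches the paper's intent exactly: the paper offers no separate proof, treating this corollary as the specialization of Theorem \ref{Th3} to the partition of $\mathbb{P}$ into singletons, which is precisely your dictionary ($\sigma^{0}$-permutable $=$ $s$-permutable, $\sigma^{0}$-subnormal $=$ subnormal, $H_{\sigma^{0}G}=H_{sG}$, every group $\sigma^{0}$-full of Sylow type). The only quibble is terminological: a complete Hall $\sigma^{0}$-set is just a choice of one Sylow subgroup per prime, not a Sylow system in the technical (pairwise permutable) sense, but this does not affect the argument.
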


\begin{Corollary}\textup{(See Skiba \cite[ Theorem 1.4]{AN3})}
If all maximal subgroups of every non-cyclic Sylow subgroup of $G$ are weakly $s$-permutable in $G$,
then $G$ is supersoluble.
\end{Corollary}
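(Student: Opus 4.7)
The plan is to derive this corollary directly from Theorem \ref{Th3} by specializing the partition $\sigma$. I would take $\sigma$ to be the finest possible partition of $\mathbb{P}$, namely $\sigma=\{\{p\}:p\in\mathbb{P}\}$. With this choice, a $\sigma_i$-subgroup is just a $p_i$-subgroup for a single prime $p_i$, a complete Hall $\sigma$-set of $G$ is exactly a complete family of Sylow subgroups $\{1,P_1,\dots,P_t\}$ (one $P_i$ for each $p_i\in\pi(G)$), and Sylow's theorem guarantees that every subgroup of $G$ is a $D_{p}$-group for every prime $p$. Hence every finite group is automatically $\sigma$-full of Sylow type under this partition, so no structural hypothesis needs to be imposed on $G$.

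Next I would verify the hypotheses of Theorem \ref{Th3} one by one. Each $W_i=P_i$ in the chosen complete Hall $\sigma$-set is a $p_i$-group, hence automatically nilpotent. A Sylow subgroup $P_i$ is non-cyclic in the sense of Theorem \ref{Th3} if and only if it is non-cyclic in the classical sense, so the assumption on maximal subgroups translates verbatim. It remains to see that every weakly $s$-permutable subgroup of $G$ is weakly $\sigma$-permutable in $G$ for the chosen $\sigma$; this is stated in the paragraph following the Definition, but for completeness I would argue: any subnormal subgroup $T$ of $G$ is $\sigma$-subnormal (a subnormal chain is a special case of the chain in the definition of $\sigma$-subnormal), and any $s$-permutable subgroup $K\leq H$ satisfies $KP^{x}=P^{x}K$ for every Sylow subgroup $P$ and every $x\in G$ (since $P^{x}$ is itself Sylow), hence $K$ is $\sigma$-permutable in $G$ with respect to $\mathcal{H}=\{1,P_1,\dots,P_t\}$. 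Therefore $H_{sG}\leq H_{\sigma G}$, and a witnessing pair $(T,H_{sG})$ for weak $s$-permutability is also a witnessing pair for weak $\sigma$-permutability.

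Putting these together, the hypothesis of Theorem \ref{Th3} is satisfied, and the conclusion gives that $G$ is supersoluble. The only nontrivial point in the plan is the final containment $H_{sG}\leq H_{\sigma G}$, which is where the two notions of ``permutable'' are reconciled; but as indicated above, this reduces to the elementary observation that the conjugate of a Sylow subgroup is again a Sylow subgroup of the same prime, so $s$-permutability immediately upgrades to $\sigma$-permutability for singleton partitions. No other obstacle is expected, which is consistent with the authors' claim that the corollary follows immediately from Theorem \ref{Th3}.
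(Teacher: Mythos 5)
Your proposal is correct and is exactly the route the paper intends: the corollary is stated as an immediate consequence of Theorem \ref{Th3}, obtained by taking $\sigma$ to be the partition of $\mathbb{P}$ into singletons, under which every group is a $\sigma$-full group of Sylow type, the $W_i$ are Sylow (hence nilpotent) subgroups, and weak $s$-permutability gives weak $\sigma$-permutability since $H_{sG}\leq H_{\sigma G}$ and subnormal subgroups are $\sigma$-subnormal. Your explicit verification of these translations, in particular the containment $H_{sG}\leq H_{\sigma G}$, fills in precisely what the authors leave implicit.
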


\begin{Corollary} \textup{(See Srinivasan \cite[Theorem 1]{SS})}
If all maximal subgroups of every Sylow subgroup of $G$ are normal in $G$,
then $G$ is supersoluble.
\end{Corollary}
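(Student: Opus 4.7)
The plan is to deduce Srinivasan's result as a direct specialization of Theorem \ref{Th3}. The key observation is that we are free to choose the partition $\sigma$ when applying Theorem \ref{Th3}, and taking $\sigma$ to be the partition of $\mathbb{P}$ into singletons reduces the $\sigma$-language to the classical Sylow language.

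First, I would set $\sigma = \{\{p\} \mid p \in \mathbb{P}\}$. Under this partition, a $\sigma_{i}$-subgroup is exactly a $p_{i}$-subgroup, so a Hall $\sigma_{i}$-subgroup is a Sylow $p_{i}$-subgroup. Consequently every finite group is automatically $\sigma$-full of Sylow type, and a complete Hall $\sigma$-set is simply $\{1, P_{1}, \ldots, P_{t}\}$ where the $P_{i}$ are Sylow subgroups, one for each prime dividing $|G|$. Each $W_{i} = P_{i}$ is a $p$-group, hence nilpotent, so the structural hypotheses on $\mathcal{H}$ in Theorem \ref{Th3} are satisfied for free.

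Next, I would verify that every normal subgroup $H$ of $G$ is weakly $\sigma$-permutable in $G$. Since $H \trianglelefteq G$, we have $HA^{x} = A^{x}H$ for every subgroup $A \leq G$ and every $x \in G$; in particular $H$ is $\sigma$-permutable in $G$ with respect to the complete Hall $\sigma$-set above, so $H \leq H_{\sigma G}$ and hence $H = H_{\sigma G}$. Taking $T = G$, which is trivially $\sigma$-subnormal in $G$, we get $G = HT$ and $H \cap T = H = H_{\sigma G}$, witnessing that $H$ is weakly $\sigma$-permutable in $G$.

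Applying this to the maximal subgroups of the Sylow subgroups of $G$: the hypothesis of Srinivasan's corollary is precisely that every such maximal subgroup is normal in $G$, so by the previous step every such maximal subgroup is weakly $\sigma$-permutable in $G$. In particular the maximal subgroups of every non-cyclic $W_{i}$ are weakly $\sigma$-permutable in $G$, and Theorem \ref{Th3} yields that $G$ is supersoluble. There is no substantive obstacle in this deduction; the only point that needs a moment's care is the observation that normality implies $\sigma$-permutability for our singleton partition, which is immediate from the definition.
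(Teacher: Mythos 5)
Your proposal is correct and is essentially the paper's intended derivation: the paper states this corollary follows immediately from Theorem \ref{Th3}, and the intended specialization is exactly yours, namely taking $\sigma$ to be the partition of $\mathbb{P}$ into singletons, so that every group is a $\sigma$-full group of Sylow type, the $W_i$ are Sylow (hence nilpotent) subgroups, and normal subgroups are $\sigma$-permutable and therefore weakly $\sigma$-permutable. Your verification of these reductions is accurate, so nothing further is needed.
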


\begin{Corollary} \textup{(See Srinivasan \cite[Theorem 2]{SS})}
If all maximal subgroups of every Sylow subgroup of $G$ are $s$-permutable in $G$,
then $G$ is supersoluble.
\end{Corollary}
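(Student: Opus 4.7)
The plan is to derive this corollary as a straightforward specialization of Theorem \ref{Th3}, by choosing the finest possible partition of $\mathbb{P}$, namely $\sigma = \{\{p\} : p\in\mathbb{P}\}$. Under this choice, a Hall $\sigma_i$-subgroup of $G$ is exactly a Sylow $p_i$-subgroup, so $G$ is automatically $\sigma$-full; moreover, every subgroup of $G$ is a $D_{\sigma_i}$-group by Sylow's theorem, so $G$ is $\sigma$-full of Sylow type without any extra assumption.

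Next I would fix a complete Hall $\sigma$-set $\mathcal{H}=\{1,W_1,\dots,W_t\}$ of $G$ with $W_i$ a Sylow $p_i$-subgroup; each $W_i$ is a $p_i$-group, hence trivially nilpotent, so the nilpotency hypothesis in Theorem \ref{Th3} is automatic. With this in hand, the task reduces to checking that the corollary's hypothesis (every maximal subgroup of every Sylow subgroup is $s$-permutable) implies the hypothesis of Theorem \ref{Th3} (every maximal subgroup of every \emph{non-cyclic} Sylow subgroup is weakly $\sigma$-permutable).

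For this translation step I would argue that any $s$-permutable subgroup $H$ of $G$ is $\sigma$-permutable in the sense of the present paper: since $H$ permutes with every Sylow subgroup of $G$, it permutes with every conjugate of every $A\in\mathcal{H}$, and $\mathcal{H}$ is indeed a complete Hall $\sigma$-set for our chosen $\sigma$. Any $\sigma$-permutable subgroup $H$ is then weakly $\sigma$-permutable by taking the trivial supplement $T=G$: here $H\cap T=H\leq H_{\sigma G}$ holds because $H$ is contained in its own $\sigma$-core. Thus the hypothesis on all maximal subgroups of every Sylow subgroup carries over verbatim, and a fortiori holds for the non-cyclic ones.

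Having matched hypotheses, I simply invoke Theorem \ref{Th3} to conclude that $G$ is supersoluble. There is no genuine obstacle here; the only point that deserves a careful sentence is the verification that $s$-permutability implies $\sigma$-permutability for this particular $\sigma$, which is essentially a definitional check.
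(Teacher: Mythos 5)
Your proposal is correct and matches the paper's intended derivation: the paper states this corollary follows immediately from Theorem \ref{Th3}, and the implicit specialization is precisely yours, namely taking $\sigma$ to be the partition of $\mathbb{P}$ into singletons, under which Hall $\sigma_i$-subgroups are Sylow subgroups, $s$-permutable subgroups are $\sigma$-permutable (hence weakly $\sigma$-permutable, as the paper itself notes in the introduction), and the Sylow-type and nilpotency hypotheses hold automatically.
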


\begin{Corollary} \textup{(See Wang \cite[Theorem 4.1]{WY1})}
If all maximal subgroups of every Sylow subgroup of $G$ are $c$-normal in $G$,
then $G$ is supersoluble.
\end{Corollary}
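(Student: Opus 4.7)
The plan is to derive this corollary as an immediate specialization of Theorem \ref{Th3}. I choose $\sigma$ to be the finest partition of $\mathbb{P}$, namely $\sigma=\{\{p\}:p\in\mathbb{P}\}$. Under this choice, a complete Hall $\sigma$-set of $G$ is precisely $\{1\}$ together with a Sylow $p$-subgroup of $G$ for each $p\in\pi(G)$; every finite group is a $\sigma$-full group of Sylow type by Sylow's theorems; and each nontrivial member $W_i$ of such a Hall set is a $p$-group and therefore nilpotent. Thus the framework hypotheses of Theorem \ref{Th3} hold automatically, and only the condition on maximal subgroups of the $W_i$ needs to be checked.

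The only real content is the observation that, for this $\sigma$, every $c$-normal subgroup of $G$ is weakly $\sigma$-permutable in $G$. Suppose $H$ is $c$-normal in $G$ with normal supplement $T\trianglelefteq G$ satisfying $G=HT$ and $H\cap T\leq H_{G}$. Since $T$ is normal in $G$, the one-step chain $T\leq G$ shows that $T$ is $\sigma$-subnormal. Moreover, $H_{G}$ is normal in $G$ and therefore permutes with every subgroup of $G$; in particular $H_{G}A^{x}=A^{x}H_{G}$ for every $A$ in any complete Hall $\sigma$-set of $G$ and every $x\in G$, so $H_{G}$ is $\sigma$-permutable in $G$. As $H_{G}\leq H$, this forces $H_{G}\leq H_{\sigma G}$, and hence $H\cap T\leq H_{G}\leq H_{\sigma G}$. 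So $H$ is weakly $\sigma$-permutable in $G$.

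Combining the two observations, the hypothesis that every maximal subgroup of every Sylow subgroup of $G$ is $c$-normal in $G$ implies, in particular, that every maximal subgroup of every non-cyclic $W_i$ is weakly $\sigma$-permutable in $G$. Theorem \ref{Th3} then yields that $G$ is supersoluble.

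The main ``obstacle'' is really just the routine verification that being $c$-normal implies being weakly $\sigma$-permutable, which reduces to identifying $H_{G}$ as a $\sigma$-permutable subgroup; no genuine structural difficulty arises beyond the correct choice of $\sigma$ and a direct appeal to Theorem \ref{Th3}.
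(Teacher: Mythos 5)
Your proposal is correct and follows the paper's own route: the paper derives this corollary immediately from Theorem \ref{Th3}, using exactly the specialization to the finest partition $\sigma=\{\{p\}:p\in\mathbb{P}\}$ and the (stated in the introduction) fact that $c$-normal subgroups are weakly $\sigma$-permutable, which you verify in the same straightforward way. Nothing further is needed.
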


Recall that a normal subgroup $E$ of $G$ is called hypercyclically embedded in $G$ and is denoted by $E\leq Z_{\mathfrak{U}}(G)$ (see \cite[p. 217]{SR}) if either $E=1$ or $E\neq1$ and every chief factor of $G$ below $E$ is cyclic, where the symbol $Z_{\mathfrak{U}}(G)$ is the $\mathfrak{U}$-hypercentre of $G$, that is, the product of all normal hypercyclically embedded subgroups of $G$.
Hypercyclically embedded subgroups play an important role in the theory of  groups (see \cite{AB2, W, SR, MW} ) and the conditions under  which a normal subgroup is  hypercyclically embedded in $G$ were found by many authors (see the books \cite{AB2, W, SR, MW} and the recent papers \cite{BL, WS2, AN4, AN5, LS, WST} ).

On the base of Theorem \ref{Th3}, we will prove the following result.

\begin{Theorem}\label{Th4}
Let $G$ be a $\sigma$-full group of Sylow type
and $\mathcal {H}=\{1,W_{1}, W_{2}, \cdots, W_{t}\}$ be a complete Hall $\sigma$-set of $G$
such that $W_{i}$ is nilpotent for all $i=1,\cdots,t$.
Let $E$ be a normal subgroup of $G$.
If every maximal subgroup of $W_{i}\cap E$ is weakly $\sigma$-permutable in $G$ for all $i=1,2,\cdots, t$,
then $E\leq Z_{\mathfrak{U}}(G)$.
\end{Theorem}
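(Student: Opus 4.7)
The plan is to proceed by induction on $|G|+|E|$, taking a counterexample $(G,E)$ of minimum order-sum. First, since $E\trianglelefteq G$, the collection $\{1, W_1\cap E,\ldots, W_t\cap E\}$ is a complete Hall $\sigma$-set of $E$ with nilpotent members. Using the standard transfer fact that a weakly $\sigma$-permutable subgroup of $G$ contained in the normal subgroup $E$ remains weakly $\sigma$-permutable in $E$ (established via the $\sigma$-subnormality properties from the paper's preliminary lemmas), the hypothesis descends to $E$, so Theorem~\ref{Th3} applies and $E$ is supersoluble. In particular any minimal normal subgroup of $G$ contained in $E$ is an elementary abelian $p$-group for some prime $p$.

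Choose such a minimal normal $L\leq E$ of $G$, with $p\in\sigma_j$, so that $L\leq W_j\cap E$. The heart of the proof is to show $|L|=p$. Suppose not, and write the nilpotent group $W_j\cap E$ as $P\times Q$ where $P$ is its Sylow $p$-subgroup (so $L\leq P$) and $Q$ is its Hall $p'$-complement. Provided $L\not\leq\Phi(P)$, I pick a maximal subgroup $L_1$ of $L$ and extend to a maximal subgroup $P_1$ of $P$ with $P_1\cap L=L_1$; then $M:=P_1\times Q$ is a maximal subgroup of $W_j\cap E$ with $M\cap L=L_1$. By hypothesis $M$ is weakly $\sigma$-permutable in $G$, giving a $\sigma$-subnormal $T\leq G$ with $G=MT$ and $M\cap T\leq M_{\sigma G}$. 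Using that $M_{\sigma G}$ is $\sigma$-permutable, hence $\sigma$-subnormal, in $G$, one argues that $M_{\sigma G}\cap L$ is $G$-invariant, so either $L\leq M_{\sigma G}\leq M$ (absurd since $M\cap L=L_1<L$) or $M_{\sigma G}\cap L=1$. In the latter case, $L\cap M\cap T\leq L\cap M_{\sigma G}=1$ together with $G=MT$ and the factorization $L=L\cap MT$ forces $L_1$ to be $G$-invariant, contradicting the minimality of $L$.

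With $|L|=p$ in hand, I would verify that $(G/L, E/L)$ satisfies the hypothesis of the theorem: the set $\{1, W_1 L/L,\ldots, W_t L/L\}$ is a complete Hall $\sigma$-set of $G/L$ with nilpotent terms, each maximal subgroup of $(W_i\cap E)L/L$ is the image of a subgroup of the form $M_0 L$ with $M_0$ a maximal subgroup of $W_i\cap E$, and images of weakly $\sigma$-permutable subgroups of $G$ remain weakly $\sigma$-permutable in $G/L$ by a standard quotient lemma. By the minimality of the counterexample $E/L\leq Z_{\mathfrak{U}}(G/L)$, and since $L$ is cyclic of prime order this lifts to $E\leq Z_{\mathfrak{U}}(G)$, a contradiction. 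The principal obstacle is the degenerate Frattini case $L\leq\Phi(P)$, in which no maximal subgroup $M$ of $W_j\cap E$ with $M\cap L<L$ is available; this will require a separate reduction, typically by showing that $L\leq\Phi(P_G)$ for a Sylow $p$-subgroup $P_G$ of $G$ forces, via a coprime action and Maschke-style complement argument, $L\leq Z_{\mathfrak{U}}(G)$ directly, so that one can factor out $L$ and still invoke the inductive hypothesis.
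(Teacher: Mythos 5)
Your frame (minimal counterexample, restricting the hypothesis to $E$ to get $E$ supersoluble via Theorem~\ref{Th3}, then passing to a quotient) matches the paper's opening moves, but the central step of your argument --- proving directly, in the original group, that a minimal normal subgroup $L\le E$ of $G$ has order $p$ --- has genuine gaps. First, you assert that $M_{\sigma G}\cap L$ is ``$G$-invariant'' because $M_{\sigma G}$ is $\sigma$-permutable, hence $\sigma$-subnormal. But $\sigma$-permutability of a $\sigma_j$-subgroup only yields $O^{\sigma_j}(G)\le N_G(M_{\sigma G})$ (Lemma~\ref{normalizes}); since $G=W_jO^{\sigma_j}(G)$, you still need the Hall $\sigma_j$-subgroup $W_j$ to normalize $M_{\sigma G}\cap L$, and nothing in your construction arranges this: you chose $P_1$ (hence $L_1=P_1\cap L$) arbitrarily, not $W_j$-invariant, and you have not identified $M_{\sigma G}\cap L$ with $L_1$. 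Likewise, in your ``latter case'' ($M_{\sigma G}\cap L=1$) the claim that the factorization $L=L\cap MT$ ``forces $L_1$ to be $G$-invariant'' is a non sequitur: $G=MT$ gives no control over $N_G(L_1)$. Second, you explicitly leave open the case $L\le\Phi(P)$, offering only a vague Maschke-style sketch; in your setup this is not a removable technicality, since without a maximal subgroup of $W_j\cap E$ meeting $L$ in a proper subgroup your whole mechanism has no starting point.

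The paper's proof is organized precisely to avoid both problems, and you should compare your plan with it. It never tries to prove $|R|=p$ at the outset. Instead it quotients by an arbitrary minimal normal subgroup $R\le E$ (using Lemma~\ref{main}(2),(3) to push the hypothesis to $(G/R,E/R)$), concludes $E/R\le Z_{\mathfrak{U}}(G/R)$, and from a counterexample deduces that $R$ is the \emph{unique} minimal normal subgroup of $G$ in $E$ with $|R|>p$ and $O_{p'}(E)=1$; it then shows $E$ itself is a $p$-group, kills $\Phi(E)$ by Lemma~\ref{XC} (this is exactly what makes your Frattini obstruction vanish, since $\Phi(E)=1$ gives $R$ a complement $S$ in $E$), and only then runs the maximal-subgroup argument with $V=R_1S$, where the maximal subgroup $R_1$ of $R$ is chosen \emph{normal in $W_1$}. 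The dichotomy $V_{\sigma G}\cap R\in\{R_1,R\}$ pins down $V_{\sigma G}\cap R=R_1$, and then $W_1$-invariance of $R_1$ together with $O^{\sigma_1}(G)\le N_G(V_{\sigma G})$ from Lemma~\ref{normalizes} yields $R_1\trianglelefteq G$, the desired contradiction. To repair your proposal you would essentially have to import these reductions (uniqueness of the minimal normal subgroup, $E$ a $p$-group, $\Phi(E)=1$, and a $W_j$-invariant choice of $L_1$), at which point you are reproducing the paper's argument rather than giving an alternative one.
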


The following results directly follow from Theorem \ref{Th4}.

\begin{Corollary} \label{cor2}
Let $\mathfrak{F}$ be a saturated formation containing all supersoluble groups
and let $E$ be a normal subgroup of $G$ with $G/E\in \mathfrak{F}$.
Suppose that $G$ is a $\sigma$-full group of Sylow type
and $\mathcal {H}=\{1,W_{1}, W_{2}, \cdots, W_{t}\}$ is a complete Hall $\sigma$-set of $G$
such that $W_{i}$ is nilpotent for all $i=1,\cdots,t$.
If every maximal subgroup of $W_{i}\cap E$ is weakly $\sigma$-permutable in $G$,
for all $i=1,2,\cdots, t$,
then $G\in \mathfrak{F}$.
\end{Corollary}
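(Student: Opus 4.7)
The plan is straightforward: the corollary is designed to be a direct consequence of Theorem \ref{Th4}, so my strategy is first to invoke that theorem to place $E$ inside the $\mathfrak{U}$-hypercentre of $G$, and then to promote this to $G\in \mathfrak{F}$ by means of a standard closure property of saturated formations.

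More precisely, the hypotheses on $G$, $E$, and the Hall $\sigma$-set $\mathcal{H}$ are exactly those required by Theorem \ref{Th4}, so the theorem immediately yields $E\leq Z_{\mathfrak{U}}(G)$. Since $\mathfrak{U}\subseteq \mathfrak{F}$ and both are saturated formations, an easy chief-factor argument (every cyclic chief factor of $G$ below $E$ is $\mathfrak{F}$-central because $\mathfrak{U}\subseteq \mathfrak{F}$) gives the inclusion $Z_{\mathfrak{U}}(G)\leq Z_{\mathfrak{F}}(G)$, and hence $E\leq Z_{\mathfrak{F}}(G)$. At this point I would quote the well-known fact (see for instance \cite{W} or \cite{SR}) that, for a saturated formation $\mathfrak{F}$, if $E$ is a normal subgroup of $G$ with $G/E\in \mathfrak{F}$ and $E\leq Z_{\mathfrak{F}}(G)$, then $G\in \mathfrak{F}$; applying it with our $E$ completes the proof.

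If the last closure property needs to be proven from scratch rather than cited, I would argue by minimal counterexample: pick $(G,E)$ with $|G|+|E|$ minimal, let $N$ be a minimal normal subgroup of $G$ contained in $E$, and verify that the pair $(G/N, E/N)$ still satisfies all the hypotheses (using that $Z_{\mathfrak{F}}(G)N/N\leq Z_{\mathfrak{F}}(G/N)$ and that $\mathfrak{F}$ is closed under quotients). Minimality forces $G/N\in \mathfrak{F}$, so $N$ must be the unique minimal normal subgroup of $G$ and must lie in $\Phi(G)$; since $N$ is $\mathfrak{F}$-central and $\mathfrak{F}$ is saturated, this yields $G\in \mathfrak{F}$, contradicting the choice of counterexample. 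The only genuine content of the corollary is therefore Theorem \ref{Th4}; there is no serious obstacle beyond correctly invoking the standard formation-theoretic lemma.
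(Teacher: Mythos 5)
Your proof is correct and is essentially the paper's own (unwritten) argument: the paper derives Corollary \ref{cor2} directly from Theorem \ref{Th4} exactly as you do, via $E\leq Z_{\mathfrak{U}}(G)\leq Z_{\mathfrak{F}}(G)$ together with the standard fact that for a saturated formation $\mathfrak{F}$ a normal subgroup $E$ with $G/E\in\mathfrak{F}$ and $E\leq Z_{\mathfrak{F}}(G)$ forces $G\in\mathfrak{F}$. (Your optional from-scratch sketch of that closure property passes too quickly over why the minimal normal subgroup must lie in $\Phi(G)$, but since the cited result is standard this does not affect the proof.)
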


\begin{Corollary} \textup{(See Asaad \cite [Theorem 4.1]{AM2})}
Let $G$ be a group with a normal subgroup $E$
such that $G/E$ is supersoluble.
If every maximal subgroup of every Sylow subgroups of $E$ is $s$-permutable in $G$,
then $G$ is supersoluble.
\end{Corollary}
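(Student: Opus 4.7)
The plan is to deduce this corollary directly from Corollary \ref{cor2} by specializing the partition $\sigma$ to the finest possible one, namely $\sigma = \{\{p\} : p \in \mathbb{P}\}$, so each $\sigma_i$ is a single prime. With this choice, Hall $\sigma_i$-subgroups are just Sylow $p_i$-subgroups, $G$ is automatically a $\sigma$-full group of Sylow type by classical Sylow theory, and any complete Hall $\sigma$-set $\mathcal{H} = \{1, W_1, \ldots, W_t\}$ is a Sylow system of $G$. In particular each $W_i$ is a $p_i$-group, hence nilpotent, so the standing hypotheses of Corollary \ref{cor2} on $(G, \mathcal{H})$ are automatic.

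Next I would translate Asaad's $s$-permutability hypothesis into the weak $\sigma$-permutability hypothesis required by Corollary \ref{cor2}. Since $E$ is normal in $G$, the intersection $W_i \cap E$ is a Sylow $p_i$-subgroup of $E$, so the maximal subgroups of $W_i \cap E$ (for $i=1,\ldots,t$) coincide, up to conjugation in $E$, with the maximal subgroups of the Sylow subgroups of $E$ appearing in Asaad's hypothesis. Under the singleton partition, $\sigma$-permutability of a subgroup $H$ unwinds to the requirement $HA^x = A^x H$ for every $A$ in the Sylow system and every $x\in G$, which, since $\{A^x : x\in G\}$ is the full set of Sylow $p$-subgroups of $G$, is exactly $HP = PH$ for every Sylow subgroup $P$, i.e.\ $s$-permutability. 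Moreover, any $\sigma$-permutable subgroup $H$ is weakly $\sigma$-permutable: take $T = G$, which is trivially $\sigma$-subnormal in itself, so that $G = HT$ and $H \cap T = H \leq H_{\sigma G}$ (because $H$ itself is $\sigma$-permutable in $G$ and so lies in its own $\sigma$-core). Therefore the assumption that every maximal subgroup of every Sylow subgroup of $E$ is $s$-permutable in $G$ implies, in particular, that every maximal subgroup of $W_i \cap E$ is weakly $\sigma$-permutable in $G$ for each $i$.

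Finally I would apply Corollary \ref{cor2} with $\mathfrak{F} = \mathfrak{U}$, the class of all finite supersoluble groups, which is a saturated formation containing (in fact equal to) the class of supersoluble groups. Since $G/E$ is supersoluble by assumption, $G/E \in \mathfrak{F}$, and all remaining hypotheses have been verified in the previous paragraph. The corollary then yields $G \in \mathfrak{F}$, i.e.\ $G$ is supersoluble. There is no real obstacle here; the only non-bookkeeping step is the identification of $s$-permutability with $\sigma$-permutability under the singleton partition, together with the observation that $\sigma$-permutable subgroups are automatically weakly $\sigma$-permutable, both of which are immediate from the definitions. The content of the argument is entirely the recognition that Asaad's classical hypothesis is the singleton-$\sigma$ specialization of the framework already developed in the paper.
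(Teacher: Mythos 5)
Your proposal is correct and is exactly the intended derivation: the paper states this corollary as a direct consequence of Theorem \ref{Th4} (via Corollary \ref{cor2}) without writing out details, and the specialization to the singleton partition $\sigma=\{\{p\}: p\in\mathbb{P}\}$, under which $\sigma$-permutability reduces to $s$-permutability and every group is a $\sigma$-full group of Sylow type with nilpotent Hall $\sigma_i$-subgroups, is precisely the implicit argument (the paper already records in the introduction that $s$-permutable subgroups are weakly $\sigma$-permutable). No gaps.
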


\begin{Corollary} \textup{(See Asaad \cite [Theorem 1.3] {AM3})}
Let $\mathfrak{F}$ be a saturated formation containing all supersoluble groups
and let $E$ be a normal subgroup of $G$ with $G/E\in \mathfrak{F}$.
If the maximal subgroups of every Sylow subgroup of E are $s$-permutable in $G$,
then $G\in \mathfrak{F}$.
\end{Corollary}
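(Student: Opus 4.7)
The plan is to deduce this corollary as a direct specialization of Corollary \ref{cor2} by choosing $\sigma$ to be the finest possible partition of $\mathbb{P}$, namely $\sigma = \sigma^{1} := \{\{p\} \mid p \in \mathbb{P}\}$, where each class $\sigma_{i}$ consists of a single prime.

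Under this $\sigma$, the general framework collapses to the classical Sylow picture. A complete Hall $\sigma$-set of $G$ is just a Sylow basis, one Sylow $p$-subgroup per prime dividing $|G|$, so by Sylow's theorem $G$ is automatically a $\sigma$-full group of Sylow type, with each $W_{i}$ a Sylow $p_{i}$-subgroup of $G$. In particular each $W_{i}$ is a $p$-group and hence nilpotent, so the structural hypotheses of Corollary \ref{cor2} on the Hall set $\mathcal{H}$ hold for free. Moreover, since the Hall $\sigma_{i}$-subgroups now coincide with Sylow subgroups, $\sigma$-permutability (with respect to $\sigma^{1}$) reduces to the usual $s$-permutability; consequently every $s$-permutable subgroup of $G$ is already weakly $\sigma$-permutable in $G$, as recorded right after Definition 1.1.

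For the normal subgroup $E$ of $G$, the intersections $W_{i} \cap E$ are Sylow $p_{i}$-subgroups of $E$, a standard consequence of $E$ being normal in $G$. Hence the maximal subgroups of $W_{i} \cap E$ are precisely the maximal subgroups of the Sylow subgroups of $E$, and by hypothesis each of them is $s$-permutable in $G$, and therefore weakly $\sigma$-permutable in $G$ by the previous paragraph. Since also $G/E \in \mathfrak{F}$, all hypotheses of Corollary \ref{cor2} are satisfied, and that corollary immediately yields $G \in \mathfrak{F}$.

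There is no genuine obstacle here: the argument is a pure specialization of the $\sigma$-theorem, not a separate proof. The only point worth checking carefully is that under the partition $\sigma^{1}$ the notion of $\sigma$-permutability reduces exactly to $s$-permutability, which is immediate once each $\sigma_{i}$ is a singleton so that a complete Hall $\sigma$-set is nothing but a full collection of Sylow subgroups of $G$.
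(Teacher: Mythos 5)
Your proposal is correct and is exactly the intended derivation: the paper states these corollaries "directly follow from Theorem \ref{Th4}" (via Corollary \ref{cor2}), and the implicit specialization is precisely the one you spell out, taking $\sigma$ to be the partition of $\mathbb{P}$ into singletons so that complete Hall $\sigma$-sets become sets of Sylow subgroups, $\sigma$-permutability collapses to $s$-permutability, and $W_i\cap E$ is a Sylow subgroup of the normal subgroup $E$. Your only added value over the paper is making these routine verifications explicit, which is harmless and accurate.
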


\begin{Corollary}  \textup{(See Wei \cite [Corollary 1] {WH})}
Let $\mathfrak{F}$ be a saturated formation containing all supersoluble groups
and let $E$ be a normal subgroup of $G$ with $G/E\in \mathfrak{F}$.
If the maximal subgroups of every Sylow subgroup of E are $c$-normal in $G$,
then $G\in \mathfrak{F}$.
\end{Corollary}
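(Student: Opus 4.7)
The plan is to derive this as an immediate specialization of Corollary \ref{cor2} by taking $\sigma$ to be the finest possible partition of $\mathbb{P}$, namely $\sigma = \{\{p\} : p \in \mathbb{P}\}$. Under this choice the notions of Hall $\sigma_{i}$-subgroup, $\sigma$-subnormal subgroup and $\sigma$-permutable subgroup collapse respectively to Sylow $p$-subgroup, subnormal subgroup and $s$-permutable subgroup, so the statement becomes directly comparable with the $c$-normal hypothesis.

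First I would record the automatic facts. For this $\sigma$, every finite group $G$ is $\sigma$-full of Sylow type by Sylow's theorems, a complete Hall $\sigma$-set of $G$ is a choice $\{1, W_{1}, \ldots, W_{t}\}$ consisting of one Sylow $p$-subgroup $W_{i}$ per prime $p \in \pi(G)$, and every such $W_{i}$ is a $p$-group, hence nilpotent. Since $E \trianglelefteq G$, the intersection $W_{i}\cap E$ is a Sylow $p$-subgroup of $E$, so the present hypothesis ``every maximal subgroup of every Sylow subgroup of $E$ is $c$-normal in $G$'' matches, subgroup-for-subgroup, the condition on maximal subgroups of $W_{i}\cap E$ required by Corollary \ref{cor2}.

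The essential step is then to verify that every $c$-normal subgroup of $G$ is weakly $\sigma$-permutable in $G$ for this $\sigma$. If $H$ is $c$-normal in $G$, choose $T \trianglelefteq G$ with $G = HT$ and $H\cap T \leq H_{G}$. The chain $T \trianglelefteq G$ already shows that $T$ is $\sigma$-subnormal. Because $H_{G}$ is normal in $G$ it permutes with every subgroup of $G$, and in particular $H_{G}A^{x}=A^{x}H_{G}$ for every member $A$ of any complete Hall $\sigma$-set of $G$ and every $x \in G$; hence $H_{G}$ is $\sigma$-permutable in $G$. Since $H_{G} \leq H$, this forces $H_{G} \leq H_{\sigma G}$, and therefore $H\cap T \leq H_{G} \leq H_{\sigma G}$, which is exactly the definition of weak $\sigma$-permutability.

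Combining these two observations, every maximal subgroup of every $W_{i}\cap E$ is weakly $\sigma$-permutable in $G$, so all hypotheses of Corollary \ref{cor2} are satisfied for this choice of $\sigma$, and the conclusion $G \in \mathfrak{F}$ follows at once. The only point requiring any genuine thought is the implication ``$c$-normal $\Rightarrow$ weakly $\sigma$-permutable,'' but this turns out to be a short definition-chase: a normal supplement is automatically $\sigma$-subnormal, and the normal core $H_{G}$ is automatically $\sigma$-permutable because it is normal.
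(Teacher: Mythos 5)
Your proposal is correct and matches the paper's intended derivation: the paper states this corollary as a direct consequence of Theorem \ref{Th4} (via Corollary \ref{cor2}), obtained precisely by taking $\sigma$ to be the partition of $\mathbb{P}$ into singletons and using the fact, noted in the introduction, that every $c$-normal subgroup is weakly $\sigma$-permutable. Your explicit verification that a normal supplement is $\sigma$-subnormal and that $H_{G}\leq H_{\sigma G}$ fills in the definition-chase the paper leaves implicit, so nothing is missing.
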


All unexplained terminologies and notations are standard,
as in \cite{W} and \cite{Doerk}.
\section{Preliminaries}
We use $\mathfrak{S}_{\sigma}$ and $\mathfrak{N}_{\sigma}$ to denote the classes of all $\sigma$-soluble groups and $\sigma$-nilpotent groups.

\begin{Lemma}\label{soluble}\textup(See {\cite[Lemma 2.1]{AN1}}
The class $\mathfrak{S}_{\sigma}$ is closed under taking direct products,
homomorphic images and subgroups.
Moreover,
any extension of the $\sigma$-soluble group by a $\sigma$-soluble group is a $\sigma$-soluble group as well.
\end{Lemma}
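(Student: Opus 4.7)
The plan is to verify each of the four closure properties separately, using a single unifying principle: any normal series of a group whose factors are all $\sigma$-primary can be refined to a chief series whose factors remain $\sigma$-primary. This rests on the elementary observation that the class of $\sigma$-primary groups is closed under subgroups and quotients, since $\pi(H/K) \subseteq \pi(G)$ whenever $K \trianglelefteq H \leq G$, so any subquotient of a $\sigma_i$-group is again a $\sigma_i$-group.

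For homomorphic images $G/N$, every chief factor of $G/N$ is isomorphic to a $G$-chief factor $L/K$ of $G$ with $N \leq K \trianglelefteq L \leq G$, hence is $\sigma$-primary by hypothesis. For a direct product $G = A \times B$ with $A, B \in \mathfrak{S}_{\sigma}$, I would splice chief series of $A$ and $B$ into a single chain $1 < A_1 \times 1 < \cdots < A \times 1 < A \times B_1 < \cdots < A \times B$; since $A$ and $B$ commute, each term is $G$-normal and each factor is isomorphic to a chief factor of $A$ or of $B$, hence $\sigma$-primary. Refining this $G$-normal series to a $G$-chief series preserves $\sigma$-primacy of factors by the unifying principle above.

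For a subgroup $H \leq G$, I would fix a chief series $1 = G_0 < \cdots < G_n = G$ of $G$ with $\sigma$-primary factors and set $H_i := H \cap G_i$; each factor $H_{i+1}/H_i$ embeds into $G_{i+1}/G_i$ via the natural map, hence is $\sigma$-primary, and any refinement to a chief series of $H$ preserves the property. For an extension $1 \to N \to G \to G/N \to 1$ with $N, G/N \in \mathfrak{S}_{\sigma}$, concatenate a chief series of $N$ with the preimage in $G$ of a chief series of $G/N$ to obtain a normal series of $G$ with $\sigma$-primary factors, then refine to a $G$-chief series. The whole argument is essentially bookkeeping; the only point requiring a sanity check is that refinement of a $G$-normal series with $\sigma$-primary factors yields a chief series still with $\sigma$-primary factors, which is immediate from subquotient closure of $\sigma$-primary groups. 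Hence no single step presents a real obstacle.
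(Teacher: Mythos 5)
The paper itself offers no proof of this lemma---it is quoted from Skiba \cite[Lemma 2.1]{AN1}---so the only question is whether your argument stands alone. Your treatment of homomorphic images, subgroups and direct products is correct: in each case you produce a genuine $G$-normal (resp.\ $H$-normal) series with $\sigma$-primary factors, and refining it and then invoking the Jordan--H\"older theorem for chief series (a step worth naming, since ``$\sigma$-soluble'' requires \emph{every} chief factor to be $\sigma$-primary, not just the factors of one convenient series) gives the conclusion.

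The extension case, however, contains a genuine gap. The terms of a chief series of $N$ are normal in $N$ but in general not normal in $G$, so the concatenated chain you build is only a subnormal series of $G$, not a normal series; your unifying principle---correctly stated only for $G$-normal series---does not apply, and a subnormal series cannot in general be refined to a chief series of $G$. The real content of extension closure is the claim that every chief factor $H/K$ of $G$ with $H\leq N$ is $\sigma$-primary when $N$ is $\sigma$-soluble, and this is not bookkeeping: such a factor need not be a chief factor of $N$, and since being $\sigma$-primary is a condition on the whole prime spectrum, refining $H/K$ into $N$-chief factors only says each piece lies in \emph{some} block $\sigma_i$, a priori different blocks for different pieces. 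A standard repair: pass to $G/K$, where $H/K$ is a minimal normal subgroup, hence characteristically simple. If it is elementary abelian it is $\sigma$-primary; otherwise it is a direct product of isomorphic non-abelian simple groups, it contains a minimal normal subgroup $T/K$ of $N/K$ (which is an $N/K$-chief factor, hence $\sigma$-primary because $N/K$ is $\sigma$-soluble by your quotient case), and $T/K$, being normal in $H/K$, is a partial product of the simple components, so $\pi(H/K)=\pi(T/K)$ lies in a single $\sigma_i$. (Equivalently, one can induct on $|G|$ over a minimal normal subgroup of $G$.) With this lemma inserted, a chief series of $G$ passing through $N$ has all factors $\sigma$-primary and the rest of your argument goes through.
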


Following \cite{AN1} and \cite{WS1}, $O^{\Pi}(G)$ to denote the subgroup of $G$ generated by all its $\Pi^{'}$-subgroups.
Instead of $O^{\{\sigma_{i}\}}(G)$ we write $O^{\sigma_{i}}(G)$.
$O_{\Pi}(G)$ to denote the subgroup of $G$ generated by all its normal $\Pi$-subgroups.

\begin{Lemma}\label{subnormal}\textup(See {\cite[Lemma 2.6]{AN1}} and {\cite[Lemma 2.1]{WS1}})
Let $A,K$ and $N$ be subgroups of $G$. Suppose that $A$ is $\sigma$-subnormal in $G$ and $N$ is normal in $G$.

$(1)$ $A\cap K$ is $\sigma$-subnormal in $K$.

$(2)$ If $K$ is a $\sigma$-subnormal subgroup of $A$,
then $K$ is $\sigma$-subnormal in $G$.

$(3)$ If $K$ is $\sigma$-subnormal in $G$,
then $A\cap K$ and $\langle A,K\rangle$ are $\sigma$-subnormal in $G$.

$(4)$ $AN/N$ is $\sigma$-subnormal in $G/N$.

$(5)$ If $N\leq K$ and $K/N$ is $\sigma$-subnormal in $G/N$,
then $K$ is $\sigma$-subnormal in $G$.

$(6)$ If $K\leq A$ and $A$ is $\sigma$-nilpotent,
then $K$ is $\sigma$-subnormal in $G$.

$(7)$ If $|G:A|$ is a $\Pi$-number,
then $O^{\Pi}(A)=O^{\Pi}(G)$.

$(8)$ If $G$ is $\Pi$-full and $A$ is a $\Pi$-group,
then $A\leq O_{\Pi}(G)$.

\end{Lemma}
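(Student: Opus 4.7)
The plan is to derive each of the eight assertions directly from the defining $\sigma$-subnormal chain $H = H_0 \le H_1 \le \cdots \le H_t = G$, arranged so that later items can reuse earlier ones. For (1), intersect each link of a chain for $A$ with $K$: normality of $H_{i-1}$ in $H_i$ transfers to $H_{i-1}\cap K \unlhd H_i\cap K$, while in the $\sigma$-primary case $(H_{i-1})_{H_i}\cap K$ is a normal subgroup of $H_i\cap K$ sitting inside $H_{i-1}\cap K$, hence inside $(H_{i-1}\cap K)_{H_i\cap K}$, and the embedding $(H_i\cap K)/((H_{i-1})_{H_i}\cap K) \hookrightarrow H_i/(H_{i-1})_{H_i}$ forces $(H_i\cap K)/(H_{i-1}\cap K)_{H_i\cap K}$ to be $\sigma$-primary. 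Part (2) is concatenation of chains. Part (4) projects the $A$-chain onto $G/N$ and uses $(A_{i-1}N/N)_{A_iN/N} \ge (A_{i-1})_{A_i}N/N$ so that $\sigma$-primary quotients descend. Part (5) lifts a chain for $K/N$ in $G/N$ to $G$, using $(L_{i-1}/N)_{L_i/N} = (L_{i-1})_{L_i}/N$ whenever $N \le L_{i-1}$.

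For (3), the intersection statement is immediate from (1) and (2): $A\cap K$ is $\sigma$-subnormal in $K$ by (1), and then in $G$ by (2). For the join $\langle A, K\rangle$ I would follow Wielandt's classical strategy adapted to the $\sigma$-setting: induct on $|G|$, look for a common non-trivial normal subgroup $N$ of $G$ inside $A\cap K$ (pass to $G/N$ via (4), lift via (5)), or else refine both chains compatibly through a suitable minimal normal subgroup. Part (6) is immediate once one notes that a $\sigma$-nilpotent group is the direct product of its Hall $\sigma_i$-subgroups, all of which are normal; hence every subgroup of $A$ is $\sigma$-subnormal in $A$, and (2) then upgrades this to $\sigma$-subnormality in $G$.

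For (7), induct on the chain length and reduce to $t=1$. If $A \unlhd G$ with $|G:A|$ a $\Pi$-number, then $O^\Pi(A)$ is characteristic in $A$ and normal in $G$; both $A/O^\Pi(A)$ and $G/A$ are $\Pi$-groups, so $G/O^\Pi(A)$ is a $\Pi$-group and $O^\Pi(G) \le O^\Pi(A)$, while the reverse inclusion is trivial. If instead $G/A_G$ is $\sigma$-primary of type $\sigma_j$, then $|G:A|$ divides $|G:A_G|$ and is therefore a $\sigma_j$-number; being simultaneously a $\Pi$-number forces $\sigma_j \subseteq \Pi$ (or else $|G:A|=1$), so $G/A_G$ is a $\Pi$-group and every $\Pi'$-subgroup of $G$ lies in $A_G\le A$. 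For (8), first apply (4) to pass to $G/O_\Pi(G)$ and reduce to $O_\Pi(G)=1$, in which case I would show $A=1$ by induction on $|G|$, analyzing the top step of a minimal chain with the same $\sigma_j$-dichotomy: the normal case embeds $A$ in a characteristic $\Pi$-subgroup of a normal subgroup of $G$, hence in $O_\Pi(G)=1$; the $\sigma$-primary case either forces $G$ itself to be a $\Pi$-group (contradicting $O_\Pi(G)=1$) or forces $A$ to be simultaneously a $\Pi$- and $\Pi'$-group, hence trivial. The hypothesis that $G$ is $\Pi$-full ensures the reductions remain in the relevant class so the induction closes.

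The main obstacles will be the join assertion in (3), which requires a Wielandt-type lattice argument adapted from ordinary subnormality to the $\sigma$-setting (with the $\sigma$-primary quotient replacing the trivial quotient), and (8), where the $\Pi$-fullness of $G$ must be carefully combined with the reduction modulo $O_\Pi(G)$ and the top-step case analysis to invoke the inductive hypothesis on a strictly smaller $\Pi$-full group.
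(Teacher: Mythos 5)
First, a remark on the comparison itself: the paper offers no proof of this lemma --- it is quoted from \cite[Lemma 2.6]{AN1} and \cite[Lemma 2.1]{WS1} --- so there is no in-paper argument to measure yours against, and I can only assess the outline on its own terms. Parts (1), (2), (4), (5), (7) and the intersection half of (3) are correct and are the standard chain manipulations: in (1) the embedding of $(H_i\cap K)/((H_{i-1})_{H_i}\cap K)$ into $H_i/(H_{i-1})_{H_i}$ together with the surjection onto $(H_i\cap K)/(H_{i-1}\cap K)_{H_i\cap K}$ is exactly right, and the two-case analysis in (7) (normal step versus $\sigma$-primary step, using that $\sigma$ is a partition to force $\sigma_j\in\Pi$) is sound. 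Part (6) is fine once one grants the structure theorem that a $\sigma$-nilpotent group is the direct product of its normal Hall $\sigma_i$-subgroups.

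Two places are genuinely incomplete. For the join in (3) you only gesture at ``Wielandt's strategy''; this is the one deep assertion in the lemma, and ``refine both chains compatibly through a suitable minimal normal subgroup'' is not an argument --- even classically the join theorem for subnormal subgroups does not follow from such a refinement. The known proofs identify $\sigma$-subnormality with $\mathrm{K}$-$\mathfrak{N}_{\sigma}$-subnormality and invoke the lattice results for $\mathrm{K}$-$\mathfrak{F}$-subnormal subgroups, or run an induction on $|G|$ via normal closures; something of this kind must be supplied. More seriously, the case analysis in (8) fails at the top step: if $G/M_{G}$ is a $\sigma_j$-group with $\sigma_j\in\Pi$ and $M_{G}\neq 1$ (where $M=H_{t-1}$), neither of your alternatives applies --- only the quotient $G/M_{G}$, not $G$ itself, is a $\Pi$-group, and $A$ need not be a $\Pi'$-group. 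Concretely, take $G=A_5\times C_7$, $\sigma=\{\{7\},\{7\}'\}$, $\Pi=\{\{7\}'\}$, $A$ a core-free subgroup of $A_5$ and $M=A\times C_7$: then $M_{G}=C_7$, $G/M_{G}\cong A_5$ is $\sigma$-primary of a type lying in $\Pi$, and your dichotomy yields nothing, although the conclusion $A\leq O_{\Pi}(G)=A_5$ is true. Closing this case needs an additional idea (for instance, first reducing to $A^{G}=G$ by induction applied to the normal closure, which is $\Pi$-full because it is normal); also, in the $\sigma_j\in\Pi'$ branch the correct conclusion is $A\leq M_{G}$ followed by induction on $M_{G}$, not that $A$ is trivial.
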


Let $\mathcal {L}$ be some non-empty set of subgroups of $G$
and $E$ a subgroup of $G$.
Then a subgroup $A$ of $G$ is called $\mathcal {L}$-permutable
if $AH=HA$ for all $H\in\mathcal {L}$;
$\mathcal {L}^{E}$-permutable
if $AH^{x}=H^{x}A$ for all $H\in\mathcal {L}$ and all $x\in E$.
In particular,
a subgroup $H$ of $G$ is $\sigma$-permutable
if $G$ possesses a complete Hall $\sigma$-set $\mathcal {H}$
such that $H$ is $\mathcal{H}^{G}$-permutable.

\begin{Lemma}\label{permutable}\textup(See {\cite[Lemma 2.8]{AN1}} and {\cite[Lemma 2.2]{WS1}})
Let $H,K$ and $N$ be subgroups of $G$.
Let $\mathcal {H}=\{H_{1},H_{2},\cdots ,H_{t}\}$ be a complete Hall $\sigma$-set of $G$
and $\mathcal {L}=\mathcal {H}^{K}$.
Suppose that $H$ is $\mathcal {L}$-permutable and $N$ is normal in $G$.

$(1)$ If $H\leq E\leq G$,
then $H$ is $\mathcal {L}^{\ast}$-permutable,
where $\mathcal {L}^{\ast}=\{H_{1}\cap E,H_{2}\cap E,\cdots ,H_{t}\cap E\}^{K\cap E}$.
In particular,
if $G$ is a $\sigma$-full group of Sylow type and $H$ is $\sigma$-permutable in $G$,
then $H$ is $\sigma$-permutable in $E$.

$(2)$ The subgroup $HN/N$ is $\mathcal {L}^{\ast\ast}$-permutable,
where $\mathcal {L}^{\ast\ast}=\{H_{1}N/N,\cdots ,H_{t}N/N\}^{KN/N}$.

$(3)$ If $G$ is a $\sigma$-full group of Sylow type and $E/N$ is a $\sigma$-permutable subgroup of $G/N$,
then $E$ is $\sigma$-permutable in $G$.

$(4)$ If $K$ is $\mathcal {L}$-permutable,
then $\langle H,K\rangle$ is $\mathcal {L}$-permutable \cite[A, Lemma 1.6(a)]{Doerk}.
In particular,
$H_{\sigma G}$ is $\sigma$-permutable in $G$.
Moreover, if $G$ is a $\sigma$-full group of Sylow type, then $H_{\sigma G}$ is a $\sigma$-subnormal of $G$(see {\cite[Theorems B and C]{AN1}}).

\end{Lemma}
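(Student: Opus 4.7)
The plan is to address the four items in sequence, each reducing to a short modular-law or lifting computation, with the $\sigma$-full Sylow-type hypothesis invoked precisely to move Hall $\sigma_i$-subgroups between $G$, its subgroups, and its quotients. For (1), the key identity is $(H_i\cap E)^x = H_i^x\cap E$ whenever $x\in K\cap E$; combining it with $HH_i^x = H_i^x H$ and the modular law (using $H\le E$) yields $H(H_i^x\cap E) = HH_i^x\cap E = (H_i^x\cap E)H$, which is exactly $\mathcal{L}^{\ast}$-permutability. For the ``in particular'' clause I would use $\sigma$-fullness of Sylow type to produce, after conjugation in $G$, a Hall $\sigma_i$-subgroup of $E$ of the form $H_i\cap E$, so that $\{1,H_1\cap E,\dots,H_t\cap E\}$ is a complete Hall $\sigma$-set of $E$; the first half then gives $\sigma$-permutability of $H$ in $E$.

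For (2), every element of $KN$ has the shape $kn$ and conjugation by $n$ is trivial modulo $N$, so $(H_iN/N)^{knN}=H_i^kN/N$, and the one-line computation $(HN)(H_i^kN)=HH_i^kN=H_i^kHN=(H_i^kN)(HN)$ finishes the job. For (3), I would push a complete Hall $\sigma$-set $\{H_i\}$ of $G$ down to $\{H_iN/N\}$ in $G/N$; the hypothesis then gives $EH_i^xN=H_i^xEN$, and because $N\le E$ with $N$ normal, both sides collapse to $EH_i^x$ and $H_i^xE$ respectively, so $E$ permutes with every $H_i^x$ and is $\sigma$-permutable in $G$.

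Item (4) is the product-closure step: the cited \cite[A, Lemma 1.6(a)]{Doerk} says precisely that the family of subgroups permuting with every element of $\mathcal{L}$ is closed under joins, and iterating this gives that $H_{\sigma G}$, being generated by $\sigma$-permutable subgroups, is itself $\sigma$-permutable. The concluding $\sigma$-subnormality claim I would simply quote from \cite[Theorems B and C]{AN1} rather than reprove. The principal bookkeeping obstacle is (1): one must verify that replacing an arbitrary Hall $\sigma_i$-subgroup of $E$ by the specific subgroup $H_i\cap E$ does not spoil the permutability relation, and this is where the $\sigma$-full Sylow type property is genuinely needed, since otherwise $H_i\cap E$ need not even be a Hall $\sigma_i$-subgroup of $E$.
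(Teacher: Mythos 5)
The paper does not actually prove this lemma; it is quoted from \cite[Lemma 2.8]{AN1} and \cite[Lemma 2.2]{WS1}. Your treatment of (1) and (2) is exactly the standard argument: for $x\in K\cap E$ one has $(H_i\cap E)^x=H_i^x\cap E$ and, by the Dedekind law, $H(H_i^x\cap E)=HH_i^x\cap E=H_i^xH\cap E=(H_i^x\cap E)H$; and in $G/N$ the conjugation and product computations collapse as you say. Your use of the Sylow-type hypothesis in the ``in particular'' clause of (1) (a Hall $\sigma_i$-subgroup of $E$ lies in some $H_i^x$, hence equals $H_i^x\cap E$) is also the right point.

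Two of your steps, however, are asserted faster than they can be justified. In (3), $\sigma$-permutability of $E/N$ only provides \emph{some} complete Hall $\sigma$-set $\{V_1/N,\dots\}$ of $G/N$ with whose conjugates $E/N$ permutes; it does not directly give $(E/N)(H_iN/N)^{xN}=(H_iN/N)^{xN}(E/N)$ for your pushed-down set $\{H_iN/N\}$. You must bridge this gap, e.g. by choosing a Hall $\sigma_i$-subgroup $U_i$ of the preimage $V_i$ (it exists because $G$ is of Sylow type), observing that $U_i$ is then a Hall $\sigma_i$-subgroup of $G$ with $U_iN=V_i$, and deducing $EU_i^x=U_i^xE$ from $EV_i^x=V_i^xE$ together with $N\le E$ and $N\trianglelefteq G$; alternatively, show every Hall $\sigma_i$-subgroup of $G/N$ is conjugate to $H_iN/N$ --- either way the Sylow-type hypothesis is needed here, not only in (1). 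Similarly, in (4) the join-closure via \cite[A, Lemma 1.6(a)]{Doerk} gives that $\langle H,K\rangle$ is $\mathcal{L}$-permutable for the \emph{fixed} family $\mathcal{L}$, but the ``in particular'' about $H_{\sigma G}$ requires that all $\sigma$-permutable subgroups of $G$ contained in $H$ permute with the conjugates of the members of one common complete Hall $\sigma$-set, whereas a priori each comes with its own witnessing set. In the $\sigma$-full group of Sylow type setting (the only one the paper uses) this is harmless, since Hall $\sigma_i$-subgroups of $G$ are then conjugate and the permutability condition ranges over all conjugates, so $\sigma$-permutability with respect to one complete Hall $\sigma$-set implies it with respect to any other; but this needs to be said explicitly. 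Delegating the final $\sigma$-subnormality claim to \cite[Theorems B and C]{AN1} is appropriate.
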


\begin{Lemma}\label{normalizes}\textup(See {\cite[Lemma 3.1]{AN1}})
Let $H$ be a $\sigma_{1}$-subgroup of a $\sigma$-full group $G$.
Then $H$ is $\sigma$-permutable in $G$
if and only if $O^{\sigma_{1}}(G)\leq N_{G}(H)$.
\end{Lemma}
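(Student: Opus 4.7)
The plan is to split the equivalence into its two implications: for $(\Rightarrow)$ I would appeal to Theorem~B of \cite{AN1} ($\sigma$-permutable implies $\sigma$-subnormal) together with the observation that $O^{\sigma_{1}}(G)$ is generated by the conjugates of the non-$\sigma_{1}$ members of any complete Hall $\sigma$-set, while for $(\Leftarrow)$ I would check permutability against $H_{1}^{x}$ and against $H_{j}^{x}$ ($j\neq 1$) separately, after first placing $H$ inside every Hall $\sigma_{1}$-subgroup.

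For $(\Rightarrow)$, let $\{1,H_{1},\dots,H_{t}\}$ be the complete Hall $\sigma$-set witnessing $\sigma$-permutability of $H$. Fix $j\neq 1$ and $x\in G$ and set $K=HH_{j}^{x}=H_{j}^{x}H$; this is a subgroup of order $|H|\,|H_{j}^{x}|$ since $\sigma_{1}\cap\sigma_{j}=\emptyset$. Inside $K$ the set $\{1,H,H_{j}^{x}\}$ is a complete Hall $\sigma$-set, so $K$ is $\sigma$-full and $H$ is its Hall $\sigma_{1}$-subgroup. Theorem~B of \cite{AN1} makes $H$ $\sigma$-subnormal in $G$, Lemma~\ref{subnormal}(1) transfers this to $K$, and Lemma~\ref{subnormal}(8) gives $H\le O_{\sigma_{1}}(K)$. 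Since $H$ is already Hall $\sigma_{1}$ in $K$, equality $H=O_{\sigma_{1}}(K)$ holds, so $H\trianglelefteq K$ and $H_{j}^{x}\le N_{G}(H)$. Finally, $N=\langle H_{j}^{x}\mid j\neq 1,\,x\in G\rangle$ is normal in $G$ with $G/N$ a $\sigma_{1}$-group, and $N$ contains every $\sigma_{1}'$-subgroup of $G$, so $N=O^{\sigma_{1}}(G)$; hence $O^{\sigma_{1}}(G)\le N_{G}(H)$.

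For $(\Leftarrow)$, assume $O^{\sigma_{1}}(G)\le N_{G}(H)$ and pick any complete Hall $\sigma$-set $\{1,H_{1},\dots,H_{t}\}$. For $j\neq 1$ and $x\in G$, $H_{j}^{x}$ is $\sigma_{1}'$ and lies in $O^{\sigma_{1}}(G)\le N_{G}(H)$, giving $HH_{j}^{x}=H_{j}^{x}H$ immediately. The $j=1$ case reduces to showing $H$ lies in every Hall $\sigma_{1}$-subgroup of $G$, equivalently $H\le O_{\sigma_{1}}(G)$. The first ingredient is that $O^{\sigma_{1}}(G)$ normalises every $G$-conjugate of $H$: for $h_{1}\in H_{1}$ and $u\in O^{\sigma_{1}}(G)$, $h_{1}uh_{1}^{-1}\in O^{\sigma_{1}}(G)\le N_{G}(H)$, which rearranges to $u\cdot H^{h_{1}}\cdot u^{-1}=H^{h_{1}}$. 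Combined with $G=H_{1}\cdot O^{\sigma_{1}}(G)$, the $G$-conjugates of $H$ are exhausted by $\{H^{h_{1}}\mid h_{1}\in H_{1}\}$, and so $H^{G}=\langle H^{h_{1}}\mid h_{1}\in H_{1}\rangle$. The crucial step is then to deduce that $H^{G}$ is a $\sigma_{1}$-subgroup, hence $H\le H^{G}\le O_{\sigma_{1}}(G)$. Since $O_{\sigma_{1}}(G)\cdot H_{1}^{y}$ is a $\sigma_{1}$-subgroup containing the Hall subgroup $H_{1}^{y}$, it equals $H_{1}^{y}$, so $H\le O_{\sigma_{1}}(G)\le H_{1}^{x}$ for every $x$, and therefore $HH_{1}^{x}=H_{1}^{x}=H_{1}^{x}H$ trivially.

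The main obstacle is the $\sigma_{1}$-ness of $H^{G}$ in the reverse direction. The quotient $H^{G}/(H^{G}\cap O^{\sigma_{1}}(G))$ is automatically $\sigma_{1}$ (it embeds in $G/O^{\sigma_{1}}(G)$), so the heart of the matter is controlling $\sigma_{1}'$-elements inside $H^{G}\cap O^{\sigma_{1}}(G)$; once $H\le O_{\sigma_{1}}(G)$ is in hand the rest is routine Hall-system bookkeeping.
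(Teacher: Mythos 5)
This lemma is quoted in the paper directly from \cite[Lemma 3.1]{AN1}; the paper supplies no proof of its own, so your attempt can only be judged on its internal merits. Your forward direction is essentially sound: $K=HH_j^{x}$ has order $|H||H_j^{x}|$, $\{1,H,H_j^{x}\}$ is a complete Hall $\sigma$-set of $K$, Lemma \ref{subnormal}(1),(8) force $H=O_{\sigma_1}(K)\trianglelefteq K$, and your identification $O^{\sigma_1}(G)=\langle H_j^{x}\mid j\neq 1,\ x\in G\rangle$ is correct. The one caveat is your appeal to Theorem B of \cite{AN1}: in the source, the normalizer criterion of Lemma 3.1 is precisely the tool used to show that $\sigma$-permutable subgroups are $\sigma$-subnormal, so as a re-proof of the cited lemma this direction is at risk of circularity (it is fine only if Theorem B is accepted as an independent black box).

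The genuine gap is in the converse. You correctly reduce the $j=1$ case to showing $H\le O_{\sigma_1}(G)$ (indeed $HH_1^{x}=H_1^{x}H$ would make $HH_1^{x}$ a $\sigma_1$-subgroup containing the Hall $\sigma_1$-subgroup $H_1^{x}$, hence equal to it, so the converse is \emph{equivalent} to $H\le\bigcap_{x}H_1^{x}=O_{\sigma_1}(G)$), and you correctly show $H^{G}=\langle H^{h_1}\mid h_1\in H_1\rangle$ with each conjugate normalized by $O^{\sigma_1}(G)$. But the step you label ``the crucial step'' --- that $H^{G}$ is a $\sigma_1$-group --- is never argued; your closing paragraph explicitly leaves it as ``the heart of the matter.'' Nothing cheap is available here: the generators $H^{h_1}$ need not lie in $H_1$ (a $\sigma_1$-subgroup of a merely $\sigma$-full group need not sit inside any Hall $\sigma_1$-subgroup), and a join of $\sigma_1$-subgroups is not in general a $\sigma_1$-group, so the claim carries essentially the entire content of this implication. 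One workable route is induction on $|G|$: if $N=H^{G}<G$, then $N$ is $\sigma$-full (each $H_i\cap N$ is a Hall $\sigma_i$-subgroup of the normal subgroup $N$) and $O^{\sigma_1}(N)\le O^{\sigma_1}(G)\le N_N(H)$, so induction gives $H\le O_{\sigma_1}(N)\le O_{\sigma_1}(G)$; but the case $H^{G}=G$ still needs a separate argument, and until that is supplied the proof is incomplete.
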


\begin{Lemma}\label{main}
Let $G$ be a $\sigma$-full group of Sylow type and $H\leq K\leq G$.

$(1)$ If $H$ is weakly $\sigma$-permutable in $G$,
then $H$ is weakly $\sigma$-permutable in $K$.

$(2)$ Suppose that $N$ is a normal subgroup of $G$ and $N\leq H$.
Then $H/N$ is weakly $\sigma$-permutable in $G/N$ if and only if $H$ is weakly $\sigma$-permutable in $G$.

$(3)$ If $N$ is a normal subgroup of $G$,
then for every weakly $\sigma$-permutable subgroup $H$ of $G$ with $(|H|,|N|)=1$,
$HN/N$ is weakly $\sigma$-permutable in $G/N$.
\end{Lemma}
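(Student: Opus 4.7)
The plan is to verify the three parts in turn, in each case exhibiting a concrete $\sigma$-subnormal supplement and then controlling the intersection with the $\sigma$-core using Lemma~\ref{subnormal} and Lemma~\ref{permutable}.

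For part (1), the natural candidate is $T_K := T \cap K$, where $T$ is the given $\sigma$-subnormal supplement of $H$ in $G$. By Lemma~\ref{subnormal}(1), $T_K$ is $\sigma$-subnormal in $K$; by Dedekind's modular law applied to $H \le K$ and $G = HT$, we get $K = K \cap HT = H(T \cap K) = H T_K$; and $H \cap T_K = H \cap T \le H_{\sigma G}$. To finish I would observe that $H_{\sigma G}$ is $\sigma$-permutable in $G$ by Lemma~\ref{permutable}(4), so by Lemma~\ref{permutable}(1) it is $\sigma$-permutable in $K$, giving $H_{\sigma G} \le H_{\sigma K}$ and therefore $H \cap T_K \le H_{\sigma K}$.

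For part (2), both directions rest on the correspondence between subgroups of $G$ containing $N$ and subgroups of $G/N$. For the implication $(\Leftarrow)$, I would push the given supplement $T$ forward to $TN/N$; it is $\sigma$-subnormal in $G/N$ by Lemma~\ref{subnormal}(4), and $N \le H$ lets me apply the modular law to get $(H/N) \cap (TN/N) = (H \cap TN)/N = (H \cap T)N/N$. Since $H_{\sigma G}$ is $\sigma$-permutable in $G$ (Lemma~\ref{permutable}(4)), $H_{\sigma G}N/N$ is $\sigma$-permutable in $G/N$ by Lemma~\ref{permutable}(2) and lies in $H/N$, hence in $(H/N)_{\sigma(G/N)}$, controlling the intersection. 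For $(\Rightarrow)$, take the preimage $T^*$ of the supplement in $G$; Lemma~\ref{subnormal}(5) gives $\sigma$-subnormality. The key observation is that $N \trianglelefteq G$ and $N \le H$ force $N$ to be $\sigma$-permutable and contained in $H$, so $N \le H_{\sigma G}$; hence each generator $L/N$ of $(H/N)_{\sigma(G/N)}$ lifts to a subgroup $L$ of $H$ with $L$ $\sigma$-permutable in $G$ by Lemma~\ref{permutable}(3), giving $L \le H_{\sigma G}$ and therefore $(H/N)_{\sigma(G/N)} \le H_{\sigma G}/N$.

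Part (3) is the most delicate. The supplement I would use is $TN/N$: by Lemma~\ref{subnormal}(4) it is $\sigma$-subnormal in $G/N$, and $G = HT$ yields $G/N = (HN/N)(TN/N)$. By Lemma~\ref{permutable}(4) and (2), $H_{\sigma G}N/N$ is $\sigma$-permutable in $G/N$ and contained in $HN/N$, so $H_{\sigma G}N/N \le (HN/N)_{\sigma(G/N)}$. The modular law gives $HN \cap TN = N(H \cap TN)$, so it suffices to show $H \cap TN \le H_{\sigma G}N$, and this is where the coprime hypothesis is essential: setting $\pi := \pi(H)$, the subgroup $N$ is a normal $\pi'$-subgroup of $TN$, so by a Schur--Zassenhaus argument every $\pi$-subgroup of $TN$ is conjugate into a Hall $\pi$-subgroup that can be chosen inside $T$. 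Applying this to the $\pi$-subgroup $H \cap TN$ and combining with $H \cap T \le H_{\sigma G}$ and the fact that $H \cap N = 1$ yields the required containment.

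The main obstacle is precisely the intersection step in part (3): the naïve identity $H \cap TN = H \cap T$ can fail for arbitrary supplements (see the $S_3$ scenarios underlying Example~\ref{e1}), so the proof cannot simply be a modular-law manipulation. The argument must genuinely combine the coprimality $(|H|,|N|) = 1$ with the $\sigma$-subnormality of $T$ and with Lemma~\ref{permutable} to relate Hall-style $\pi$-subgroups of $TN$ to those of $T$ and then to $H_{\sigma G}$. Parts (1) and (2), by contrast, are essentially bookkeeping over the lemmas from Section~2.
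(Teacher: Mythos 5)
Your parts (1) and (2) are correct and essentially the paper's own argument; the only cosmetic difference is that in the converse of (2) you lift the $\sigma$-core of $H/N$ generator-by-generator via Lemma \ref{permutable}(3), while the paper lifts $(H/N)_{\sigma(G/N)}$ in one step using Lemma \ref{permutable}(3)(4).

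Part (3) contains a genuine gap, and your diagnosis of where the difficulty lies is mistaken. The Schur--Zassenhaus step does not close: writing $X:=H\cap TN$ and $\pi:=\pi(H)$, you can indeed find (inside $XN$, where $N$ is a normal Hall $\pi'$-subgroup) a complement $Y\leq T$ and an element $n\in N$ with $X^{n}=Y$; but then $Y\leq H^{n}\cap T$, equivalently $X\leq H\cap T^{\,n^{-1}}$, and the hypothesis only controls $H\cap T\leq H_{\sigma G}$ --- it gives no information about $H^{n}\cap T$ or $H\cap T^{\,n^{-1}}$, and there is no reason for $H_{\sigma G}$ (or $H_{\sigma G}N$) to absorb such conjugate intersections. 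So "combining with $H\cap T\leq H_{\sigma G}$ and $H\cap N=1$" does not yield $H\cap TN\leq H_{\sigma G}N$: the sketch stops exactly where it needs to finish. The missing idea is much simpler, and it also refutes your claim that the proof "cannot simply be a modular-law manipulation": since $G=HT$ and $N\trianglelefteq G$, the index $|N:N\cap T|=|NT:T|$ divides $|G:T|=|H:H\cap T|$, hence divides $|H|$; it also divides $|N|$, so $(|H|,|N|)=1$ forces $N=N\cap T$, that is, $N\leq T$ (the paper obtains this from the factorization $N=(N\cap H)(N\cap T)$ via \cite[A, Lemma 1.6]{Doerk}). Once $N\leq T$ is in hand, $TN=T$, the subgroup $T/N$ is $\sigma$-subnormal in $G/N$ by Lemma \ref{subnormal}(4), and Dedekind's law gives $(HN/N)\cap(TN/N)=(H\cap T)N/N\leq H_{\sigma G}N/N\leq (HN/N)_{\sigma(G/N)}$ by Lemma \ref{permutable}(2)(4) --- precisely the routine computation you were trying to avoid. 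In short: the coprimality is used to force $N\leq T$, not to run a conjugacy argument, and your proposed route for (3) would need a substantial new idea to control the conjugates it creates.
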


\begin{proof}
$(1)$ Suppose that there exists a $\sigma$-subnormal subgroup $T$ of $G$
such that $G=HT$ and $H\cap T\leq H_{\sigma G}$.
Since $H\leq K$,
$K=H(K\cap T)$.
By Lemma \ref{subnormal} (1), $K\cap T$ is $\sigma$-subnormal in $K$.
Moreover, $H\cap(K\cap T)=H\cap T\leq H_{\sigma G}\leq H_{\sigma K}$ by Lemma \ref{permutable} (1)(4).
Hence, $H$ is weakly $\sigma$-permutable in $K$.

$(2)$ First assume that there exists a $\sigma$-subnormal subgroup $T$ of $G$
such that $G=HT$ and $H\cap T\leq H_{\sigma G}$.
Then $G/N=(H/N)(TN/N)$, $TN/N$ is $\sigma$-subnormal in $G/N$ by Lemma \ref{subnormal} (4)
and $H/N\cap TN/N=(H\cap T)N/N\leq H_{\sigma G}N/N\leq(H/N)_{\sigma(G/N)}$ by Lemma \ref{permutable} (2).
This shows that $H/N$ is weakly $\sigma$-permutable in $G/N$.

Conversely, assume that $H/N$ is weakly $\sigma$-permutable in $G/N$.
Then $G/N=(H/N)(T/N)$ and $H/N\cap T/N\leq(H/N)_{\sigma(G/N)}$,
where $T/N$ is $\sigma$-subnormal in $G/N$.
So $G=HT$ and $T$ is $\sigma$-subnormal in $G$ by Lemma \ref{subnormal} (5).
Let $(H/N)_{\sigma(G/N)}=E/N$.
Then $E$ is $\sigma$-permutable in $G$ by Lemma \ref{permutable} (3)(4).
Hence $H\cap T\leq E\leq H_{\sigma G}$.
This shows that $H$ is weakly $\sigma$-permutable in $G$.

$(3)$ Assume that there exists a $\sigma$-subnormal subgroup $T$ of $G$
such that $G=HT$ and $H\cap T\leq H_{\sigma G}$.
Then $G/N=(HN/N)(TN/N)$.
Since $(|H|,|N|)=1$,
$(|HT\cap N:H\cap N|,|HT\cap N:T\cap N|)=(|(HT\cap N)H:H|,|(HT\cap N)T:T|)=1$.
Hence $N=N\cap HT=(N\cap H)(N\cap T)=N\cap T$ by \cite[A, Lemma 1.6]{Doerk}.
It follows that $N\leq T$.
Hence $(HN/N)\cap (TN/N)=(H\cap T)N/N\leq H_{\sigma G}N/N\leq(HN/N)_{\sigma(G/N)}$
by Lemma \ref{permutable} (2)(4).
Besides, by Lemma \ref{subnormal} (4), $T/N$ is $\sigma$-subnormal in $G/N$.
Thus $HN/N$ is weakly $\sigma$-permutable in $G/N$.
\end{proof}

\begin{Lemma}\label{XC}\textup{(See \cite [Lemma 2.12]{CGA}})
Let $P$ be a normal $p$-subgroup of a group $G$ .
If $P/\Phi(P)\leq Z_{\mathcal {U}}(G/\Phi(P))$,
then $P\leq Z_{\mathcal {U}}(G)$.
\end{Lemma}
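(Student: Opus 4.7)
The plan is to prove the lemma by induction on $|P|$, showing that every chief factor of $G$ below $P$ is cyclic (equivalently $P\le Z_{\mathcal U}(G)$). When $\Phi(P)=1$ the conclusion is exactly the hypothesis, so assume $\Phi(P)\ne 1$. I would pick a minimal normal subgroup $N$ of $G$ with $N\le P$ and work to show $|N|=p$. Once this is achieved, the hypothesis transfers to the smaller pair $(G/N,P/N)$: since $\Phi(P/N)=\Phi(P)N/N$ for $p$-groups, $(P/N)/\Phi(P/N)\cong P/\Phi(P)N$ is a quotient of $P/\Phi(P)\le Z_{\mathcal U}(G/\Phi(P))$, and the $\mathcal U$-hypercentre is preserved under quotients, so the hypothesis holds for $(G/N,P/N)$. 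By induction $P/N\le Z_{\mathcal U}(G/N)$, and combined with $N\le Z_{\mathcal U}(G)$ (from $|N|=p$) this yields $P\le Z_{\mathcal U}(G)$.

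The case $N\not\le\Phi(P)$ is immediate: minimality of $N$ forces $N\cap\Phi(P)=1$, so $N\cong N\Phi(P)/\Phi(P)$ is a minimal normal subgroup of $G/\Phi(P)$ contained in $P/\Phi(P)\le Z_{\mathcal U}(G/\Phi(P))$, hence cyclic of order $p$ by the hypothesis.

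The main obstacle is the case $N\le\Phi(P)$. Here I would set $C=C_G(P)$ and $K=C_G(P/\Phi(P))$, and combine two standard facts: (a) $K/C$ is a $p$-group, since the kernel of $\mathrm{Aut}(P)\to\mathrm{Aut}(P/\Phi(P))$ is a $p$-group; (b) $G/K$ acts faithfully on $P/\Phi(P)\cong\mathbb F_p^n$ and, by hypothesis, stabilises a $G$-invariant flag with factors of order $p$, so $G/K$ embeds into the upper triangular subgroup $B_n\le GL_n(\mathbb F_p)$, whose normal Sylow $p$-subgroup $U_n$ has abelian quotient of exponent dividing $p-1$. Together (a) and (b) show that $\bar G:=G/C$ has a normal Sylow $p$-subgroup $\bar S$ with $\bar G/\bar S$ abelian of exponent dividing $p-1$. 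Since $\bar S$ is a normal $p$-subgroup of $\bar G$ acting on the $p$-group $N$, the subgroup $C_N(\bar S)$ is $G$-invariant and non-trivial (by the usual fixed-point theorem for $p$-group actions on $p$-groups), so minimality of $N$ forces $\bar S$ to centralise $N$. Hence $\bar G$ acts on $N$ through the abelian group $\bar G/\bar S$ of exponent dividing $p-1$; every irreducible $\mathbb F_p$-representation of such a group is one-dimensional (all characters take values in $\mathbb F_p^*$, so the Frobenius orbits of characters are singletons). Therefore $|N|=p$, completing the inductive step.
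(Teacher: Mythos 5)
The paper does not prove this lemma: it is imported verbatim from \cite[Lemma 2.12]{CGA}, so there is no internal proof to measure yours against. Your argument is correct and self-contained. The induction on $|P|$, the reduction to showing that a minimal normal subgroup $N$ of $G$ inside $P$ has order $p$, and the transfer of the hypothesis to $(G/N,P/N)$ via $\Phi(P/N)=\Phi(P)N/N$ and the quotient-stability of $Z_{\mathcal U}$ are all sound; the case $N\not\le\Phi(P)$ is handled correctly (note that $N\cap\Phi(P)$ is normal in $G$ because $\Phi(P)$ is characteristic in the normal subgroup $P$, so minimality does give $N\cap\Phi(P)=1$). In the essential case $N\le\Phi(P)$ you combine exactly the two classical ingredients: Burnside's theorem that the kernel of $\mathrm{Aut}(P)\to\mathrm{Aut}(P/\Phi(P))$ is a $p$-group, and the fact that a $G$-stable flag on $P/\Phi(P)$ with factors of order $p$ puts $G/C_G(P/\Phi(P))$ inside a Borel subgroup of $GL_n(\mathbb{F}_p)$; hence $G/C_G(P)$ has a normal Sylow $p$-subgroup with quotient abelian of exponent dividing $p-1$, the fixed-point argument forces that Sylow $p$-subgroup to centralize $N$, and the splitting of $x^{p-1}-1$ over $\mathbb{F}_p$ yields $|N|=p$. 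This is essentially the standard proof of the result; the cited source reaches the same conclusion more briskly by invoking the canonical local definition of the saturated formation $\mathfrak U$ (showing $G/C_G(P)$ lies in $F(p)$ and hence that every chief factor of $G$ below $P$ is $\mathfrak U$-central), whereas your version replaces that formation-theoretic machinery with an explicit representation-theoretic computation. Both routes are valid; yours is longer but more elementary.
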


 \section{Proof of Theorem \ref{Th1}}

{\bf Proof of Theorem \ref{Th1}.}
Assume that this is false and let $G$ be a counterexample of minimal order.
Then  $|\sigma(G)|>1$.

$(1)$ $G/N$ is $\sigma$-soluble,
for every non-identity normal subgroup $N$ of $G$.

Let $N$ be a non-identity normal subgroup of $G$ and $H/N$ is any Hall $\sigma_{i}$-subgroup of $G/N$,
where $\sigma_{i}\cap \pi(G/N)\neq\emptyset$.
Then $H/N = H_{i}N/N$ for some Hall $\sigma_{i}$-subgroup $H_{i}$ of $G$.
By the hypothesis,
there exists a $\sigma$-subnormal subgroup $T$ of $G$
such that $G=H_{i}T$ and $H_{i}\cap T\leq (H_{i})_{\sigma G}$.
Then $G/N=(H_{i}N/N)(TN/N)=(H/N)(TN/N)$.
Since $|H_{i}N\cap T:H_{i}\cap T|=|(H_{i}N\cap T)H_{i}:H_{i}|$ is a $\sigma_{i}'$-number,
$(|H_{i}N\cap T:H_{i}\cap T|,|H_{i}N\cap T:N\cap T|)=1$.
Hence $H_{i}N\cap T=(H_{i}\cap T)(N\cap T)$ by \cite[A, Lemma 1.6]{Doerk}.
Consequently, $(H_{i}N/N)\cap (TN/N)=(H_{i}N\cap T)N/N=(H_{i}\cap T)N/N\leq (H_{i})_{\sigma G}N/N\leq(H_{i}N/N)_{\sigma(G/N)}$
by Lemma \ref{permutable} (2)(4).
By Lemma \ref{subnormal} (4), $TN/N$ is $\sigma$-subnormal in $G/N$.
Therefore $H/N$ is weakly $\sigma$-permutable in $G/N$.
This shows that $G/N$ satisfies the hypothesis.
The minimal choice of $G$ implies that $G/N$ is $\sigma$-soluble.

$(2)$ $G$ is not a simple group.

Suppose that $G$ is a non-abelian simple group.
Then $1$ is the only proper $\sigma$-subnormal subgroup of $G$.
Let $H_i$ be a non-identity Hall $\sigma_{i}$-subgroup of $G$,
where $\sigma_{i}\in \sigma(G)$.
By the hypothesis and $|\sigma(G)|>1$,
we have $G=H_{i}G$ and $H_{i}=H_{i}\cap G\leq(H_{i})_{\sigma G}$.
By Lemma \ref{permutable} (4), $(H_{i})_{\sigma G}$ is $\sigma$-subnormal in $G$,
so $H_{i}=(H_{i})_{\sigma G}=1$, a contradiction.
So we have (2).

$(3)$ Let $R$ be a minimal normal subgroup of $G$,
then $R$ is $\sigma$-soluble.

Let $H$ be any  Hall $\sigma_{i}$-subgroup of $R$,
where $\sigma_{i}\cap \pi(R)\neq\emptyset$.
Then there exists a Hall $\sigma_{i}$-subgroup $H_{i}$ of $G$ such that $H=H_{i}\cap R$.
By the hypothesis,
there exists a $\sigma$-subnormal subgroup $T$ of $G$
such that $G=H_{i}T$ and $H_{i}\cap T\leq(H_{i})_{\sigma G}$.
Since $|H_{i}T\cap R:H_{i}\cap R|=|(H_{i}T\cap R)H_{i}:H_{i}|$ is a $\sigma_{i}^{'}$-number,
$(|H_{i}T\cap R:H_{i}\cap R|,|H_{i}T\cap R:T\cap R|)=1$.
Hence, $R=R\cap H_{i}T=(H_{i}\cap R)(R\cap T)=H(R\cap T)$ by \cite[A, Lemma 1.6(c)]{Doerk}.
Since $R\cap T$ is $\sigma$-subnormal in $R$ by Lemma \ref{subnormal} (1)
and $H\cap R\cap T=(R\cap H_{i})\cap (R\cap T)\leq(H_{i})_{\sigma G}\cap R\leq (H)_{\sigma R}$ by Lemma \ref{permutable} (1)(4), $R$ satisfies the hypothesis.
The minimal choice of $G$ implies that $R$ is $\sigma$-soluble.

$(4)$ Final contradiction.

In veiw of (1), (2) and (3), we have that $G$ is $\sigma$-soluble by Lemma \ref{soluble}.
The final contradiction completes the proof of the theorem.

 \section{Proof of Theorem \ref{Th3}}

First we prove the following proposition, which is a main step of the proof of Theorem \ref{Th3}.

\begin{Proposition}\label{prop1}
Let $G$ be a $\sigma$-full group of Sylow type
and $\mathcal {H}=\{1,W_{1}, W_{2}, \cdots, W_{t}\}$ be a complete Hall $\sigma$-set of $G$
such that $W_{i}$ is a nilpotent $\sigma_{i}$-subgroup for all $i=1,\cdots,t$,
and let the smallest prime $p$ of $\pi(G)$ belongs to $\sigma_{1}$.
If every maximal subgroup of $W_{1}$ is weakly $\sigma$-permutable in $G$,
then $G$ is soluble.
\end{Proposition}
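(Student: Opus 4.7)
The plan is to argue by minimal counterexample; let $G$ be a counterexample of minimal order.

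Since $p$ is the smallest prime of $\pi(G)$, if $p$ were odd then $|G|$ would be odd and $G$ would be soluble by the Feit--Thompson theorem, a contradiction. Hence $p=2$.

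I would next verify that for any nontrivial normal subgroup $N$ of $G$ with $2\mid|G/N|$, the quotient $G/N$ inherits the hypothesis: $\{W_iN/N\}$ is a complete Hall $\sigma$-set of $G/N$ of nilpotent subgroups, and every maximal subgroup of $W_1N/N$ has the form $UN/N$ for a maximal subgroup $U$ of $W_1$ containing $W_1\cap N$ (using the Sylow decomposition of the nilpotent $W_1$); the weak $\sigma$-permutability of $UN/N$ in $G/N$ then follows from Lemma \ref{main}(2) or (3) according as $N$ is a $\sigma_1$- or a $\sigma_1'$-group. By minimality, $G/N$ is soluble. Let $R$ be a minimal normal subgroup of $G$. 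If $R$ is abelian, then $R$ is soluble and either $2\mid|G/R|$ (so $G/R$ is soluble by the above) or $|G/R|$ is odd (so $G/R$ is soluble by Feit--Thompson); in either case $G$ is soluble, a contradiction. Hence $R=S^k$ is a direct product of isomorphic non-abelian simple groups, and since subgroups of the nilpotent group $W_1$ are nilpotent while $R$ is not, $R\not\leq W_1$.

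To conclude, choose a maximal subgroup $M$ of $W_1$ containing $R\cap W_1$ (possible since $R\cap W_1<W_1$). By hypothesis, $G=MT$ for some $\sigma$-subnormal $T\leq G$ with $M\cap T\leq M_{\sigma G}\leq O_{\sigma_1}(G)$ (via Lemmas \ref{permutable}(4) and \ref{subnormal}(8)). Minimality of $R$ gives $R\cap O_{\sigma_1}(G)=1$ (the alternative $R\leq O_{\sigma_1}(G)\leq W_1$ is excluded by the previous paragraph). The index $|G:T|=|M:M\cap T|$ divides the $\sigma_1$-number $|M|$, so $T$ contains a Hall $\sigma_1'$-subgroup of $G$, and hence of $R$. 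By Lemma \ref{subnormal}(1), $T\cap R$ is $\sigma$-subnormal in $R$; since each simple factor $S$ of $R$ satisfies $|\sigma(S)|\geq 2$ (as $R$ contains primes both in and outside $\sigma_1$), the $\sigma$-subnormal subgroups of $R$ are precisely products of its simple direct factors, so $T\cap R$, being such a product that contains a Hall $\sigma_1'$-subgroup of $R$, must equal $R$, i.e., $R\leq T$. Then $R\cap W_1\leq R\cap M\cap T\leq R\cap O_{\sigma_1}(G)=1$, contradicting $R\cap W_1\neq 1$ (which holds since $|R|$ is even and $2\in\sigma_1$). The edge case $W_1\leq R$ leads via the same steps to $M\leq O_{\sigma_1}(G)\cap R=1$, hence $|W_1|=2$; Burnside's normal $2$-complement theorem then forces $G$ soluble, another contradiction.

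The main obstacle is orchestrating the last paragraph: combining (i) the impossibility of $R\leq W_1$ because nilpotent groups admit no non-abelian simple section, (ii) the rigid description of $\sigma$-subnormal subgroups of $R$ as products of its simple direct factors, which is what forces $R\leq T$, (iii) the $\sigma_1$-nature of $|G:T|$ so that $T$ absorbs the $\sigma_1'$-part of $R$, and (iv) Burnside's theorem to dispose of the degenerate subcase $W_1\leq R$.
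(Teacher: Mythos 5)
Your overall strategy is sound and genuinely different from the paper's in its endgame. The paper first reduces to $\sigma$-solubility, proves $O_{\sigma_{1}}(G)=1=O_{\sigma_{1}'}(G)$, shows $G=RW_{1}$ with a unique minimal normal subgroup $R$, then forces $W_{1}$ to be a $2$-group by playing two maximal subgroups of $W_{1}$ of coprime prime indices against each other (their supplements $T_{1},T_{2}$ both contain $R$, yet $W_{1}\cap T_{1}$ and $W_{1}\cap T_{2}$ intersect trivially, contradicting $W_{1}\cap R\neq 1$), and finishes with a $2$-nilpotence argument on the supplement of an index-$2$ maximal subgroup. You instead take $R$ non-abelian minimal normal, choose a maximal subgroup $M$ of $W_{1}$ above $R\cap W_{1}$, force $R\leq T$ from the structure of $\sigma$-subnormal subgroups of $R$, and contradict $R\cap W_{1}\neq1$; the degenerate case $W_{1}\leq R$ is disposed of by Burnside. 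This buys a shorter endgame (no $O_{\sigma_{1}'}(G)=1$ step, no uniqueness of the minimal normal subgroup, no two-maximal-subgroup trick), but it rests on an auxiliary structural fact the paper never needs.

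That fact is the one real gap: you assert without proof or reference that the $\sigma$-subnormal subgroups of $R=S_{1}\times\cdots\times S_{k}$ (non-abelian simple factors) are exactly the products of the direct factors. This is not among the quoted items of Lemma \ref{subnormal}, and it is false without the hypothesis that each $S_{i}$ is not $\sigma$-primary (if $S_{i}$ were a $\sigma_{1}$-group, every subgroup of $R$ would be $\sigma$-subnormal). You do have that hypothesis available -- if $\pi(R)\subseteq\sigma_{1}$ then $R\leq O_{\sigma_{1}}(G)\leq W_{1}$, impossible since $W_{1}$ is nilpotent, and $2\in\pi(R)\cap\sigma_{1}$ by Feit--Thompson -- but the structural claim itself still needs a short argument: induct on the defining chain $H=H_{0}\leq\cdots\leq H_{t}=R$, noting that every proper quotient of $R$ is a nontrivial direct product of copies of the $S_{i}$ and hence not $\sigma$-primary, so the penultimate term must be normal in $R$, i.e.\ a product of factors, and one repeats inside it. Two smaller repairs: (i) your phrase ``$T$ contains a Hall $\sigma_{1}'$-subgroup of $G$, and hence of $R$'' is unjustified, since Hall $\sigma_{1}'$-subgroups need not exist here; what you actually have, and what suffices, is that $|R:R\cap T|$ divides the $\sigma_{1}$-number $|G:T|$, so $R\cap T$ cannot omit a simple factor $S_{i}$ (whose order is not a $\sigma_{1}$-number). (ii) Your quotient-inheritance step is stated for an arbitrary normal $N$ but justified via Lemma \ref{main}(2)/(3) only when $N$ is a $\sigma_{1}$- or $\sigma_{1}'$-group; since you only apply it to an abelian minimal normal subgroup, which is an elementary abelian $q$-group, restrict the statement to that case (this is exactly how the paper argues in its steps (1) and (2)). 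With these adjustments your proof is correct.
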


\begin{proof}
First note that if $G$ is $\sigma$-soluble, then every chief factor $H/K$ of $G$ is $\sigma$-primary, that is, $H/K$ is a $\sigma_{i}$-group for some $i$.
But since $W_{i}$ is  nilpotent, $H/K$ is a elementary abelian group.
It follows that $G$ is soluble. Hence we only need to prove that $G$ is $\sigma$-soluble.
Suppose that the assertion is false,
and let $G$ be a counterexample of minimal order.
Then clearly $t > 1$, and $p=2 \in \pi(W_{1})$ by the well-known Feit-Thompson's theorem.
Without loss of generality, we can assume that $W_{i}$ is a $\sigma_{i}$-group
for all $i=1, 2,\cdots, t$.

$(1)$ $O_{\sigma_{1}}(G)=1$.

Assume that $N=O_{\sigma_{1}}(G)\neq1$.
Note that if $W_{1}=N$, then $G/N$ is a $\sigma_{1}'$-group, so $G/N$ is soluble by the well-known Feit-Thompson's theorem and so $G$ is $\sigma$-soluble.
We may, therefore, assume that $W_{1} \neq N$.
Then $W_{1}/N$ is a non-identity Hall $\sigma_{1}$-subgroup of $G/N$.
Let $M/N$ be a maximal subgroup of $W_{1}/N$.
Then $M$ is a maximal subgroup of $W_{1}$.
By the hypothesis and Lemma \ref{main} (2),
$M/N$ is weakly $\sigma$-permutable in $G/N$.
The minimal choice of $G$ implies that $G/N$ is $\sigma$-soluble.
Consequently, $G$ is $\sigma$-soluble.
This contradiction shows that (1) holds.

$(2)$ $O_{\sigma_{1}'}(G)=1$.

Assume that $R=O_{\sigma_{1}'}(G)\neq1$.
Then $W_{1}R/R$ is a Hall $\sigma_{1}$-subgroup of $G/R$.
Let $M/R$ be a maximal subgroup of $W_{1}R/R$.
Then $M=(M\cap W_{1})R$.
Since $W_{1}$ is nilpotent and $|W_{1}R/R:M/R|=|W_{1}R/R:(M\cap W_{1})R/R|=|W_{1}:M\cap W_{1}|$,
$M\cap W_{1}$ is a maximal subgroup of $W_{1}$.
By the hypothesis and Lemma \ref{main} (3),
$M/R=(M\cap W_{1})R/R$ is weakly $\sigma$-permutable in $G/R$.
This shows that $G/R$ satisfies the hypothesis.
The choice of $G$ implies that $G/R$ is $\sigma$-soluble.
By the well-known Feit-Thompson's theorem,
we know that $R$ is soluble.
It follows that $G$ is $\sigma$-soluble, a contradiction.

$(3)$ If $R\neq1$ is a minimal normal subgroup of $G$,
then $R$ is not $\sigma$-soluble and $G=RW_{1}$.

If $R$ is $\sigma$-soluble,
then $R$ is a $\sigma_{i}$-group for some $\sigma_{i}\in \sigma(G)$.
So $R\leq O_{\sigma_{1}}(G)$ or $R\leq O_{\sigma_{1}^{'}}(G)$, a contradiction.
Therefore, $R$ is not $\sigma$-soluble.
Assume that $RW_{1}<G$.
Then by the hypothesis and Lemma \ref{main} (1),
$RW_{1}$ satisfies the hypothesis.
Hence $RW_{1}$ is $\sigma$-soluble by the choice of $G$.
It follows from Lemma \ref{soluble} that $R$ is $\sigma$-soluble.
This contradiction shows that $G=RW_{1}$.

$(4)$ $G$ has a unique minimal normal subgroup $R$.

By (3),
$G=RW_{1}$ for every non-identity minimal normal subgroup $R$ of $G$.
Then clearly, $G/R$ is $\sigma$-soluble.
Hence by Lemma \ref{soluble} $G$ has a unique minimal normal subgroup,
which is denoted by $R$.

$(5)$ $W_{1}$ is a $2$-group.

Let $q\in \pi(W_{1})\backslash\{2\}$.
As $W_{1}$ is nilpotent,
there exists two maximal subgroups $M_{1}$ and $M_{2}$ of $W_{1}$
such that $|W_{1}:M_{1}|=q$ and $|W_{1}:M_{2}|=2$.
By the hypothesis,
there exists $\sigma$-subnormal subgroups $T_{i}$ of $G$,
such that $G=M_{i}T_{i}$ and $M_{i}\cap T_{i}\leq (M_{i})_{\sigma G}$,
$i=1,2$.
By Lemma \ref{permutable} (4),  $(M_{i})_{\sigma G}$ is $\sigma$-subnormal in $G$.
Then by Lemma \ref{subnormal} (8),
$(M_{i})_{\sigma G}\leq O_{\sigma_{1}}(G)=1$, $i=1,2$.
Hence $M_{i}\cap T_{i}=1,i=1,2$.
Consequently, $|G:T_{i}|=|M_{i}:M_{i}\cap T_{i}|=|M_{i}|$, $i=1,2$,
which implies that $|G:T_{i}|$ is a $\sigma_{1}$-number
for $i=1, 2$.
Hence $O^{\sigma_{1}}(T_{i})=O^{\sigma_{1}}(G)$ for $i=1,2$ by Lemma \ref{subnormal} (7).
Since $t > 1$, $O^{\sigma_{1}}(G)>1$. It follows that $1\neq O^{\sigma_{1}}(G)\leq (T_{i})_{G}$ for $i=1,2$.
Then by (4), $R\leq (T_{1})_{G} \cap (T_{2})_{G}\leq T_{1} \cap T_{2}$.
It is clear that $W_{1}\cap R$ is a Hall $\sigma_{1}$-subgroup of $R$,
and $W_{1}\cap R\neq1$ by (2).
So $1\neq W_{1}\cap R\leq T_{1}\cap T_{2}\cap W_{1}$.
Since $G=M_{1}T_{1}=W_{1}T_{1}=M_{2}T_{2}=W_{1}T_{2}$,
where $M_{1}\cap T_{1}=1$ and $M_{2}\cap T_{2}=1$,
we have that $|W_{1}\cap T_{1}|=|W_{1}: M_{1}|=q$ and
$|W_{1}\cap T_{2}|=|W_{1}: M_{2}|=2$.
Therefore $(W_{1}\cap T_{1})\cap (W_{1}\cap T_{2})=1$,
which implies that $1\neq W_{1}\cap R\leq T_{1}\cap T_{2}\cap W_{1}=(T_{1}\cap W_{1})\cap (T_{2}\cap W_{1})=1$.
This contradiction shows that $W_{1}$ is a $2$-group.

$(6)$ Final contradiction.

Let $P_{1}$ be a maximal subgroup of $W_{1}$.
Then $|W_{1}:P_{1}|=2$.
By the hypothesis,
there exists a $\sigma$-subnormal subgroup $K$ of $G$
such that $G=P_{1}K$ and $P_{1}\cap K\leq (P_{1})_{\sigma G}$.
By (1) and Lemma \ref{subnormal} (8), $(P_{1})_{\sigma G}=1$,
Hence $|K|_{2}=2$,
and so $K$ is $2$-nilpotent by \cite[IV, Theorem 2.8]{HU}.
Let $K_{2^{'}}$ be the normal Hall $2^{'}$-subgroup of $K$.
Then $1 \neq K_{2^{'}}$ is $\sigma$-subnormal in $G$,
and so $K_{2^{'}}\leq O_{\sigma_{1}^{'}}(G)=1$ by Lemma \ref{subnormal}(8).
The finial contradiction completes the proof.
\end{proof}

{\bf Proof of Theorem \ref{Th3}.}
Assume that the assertion is false,
and let $G$ be a counterexample of minimal order.

$(1)$ $G$ is soluble.

Let $q$ is the smallest prime dividing $|G|$. Without loss of generality, we may assume that $q\in \pi(W_{1})$.
If $W_{1}$ is cyclic,
then the Sylow $q$-subgroup of $G$ is cyclic.
Hence $G$ is $q$-nilpotent by \cite[IV, Theorem 2.8]{HU}
and so $G$ is soluble.
If $W_{1}$ is non-cyclic,
then by Proposition \ref{prop1}, $G$ is soluble.
Hence we always have that $G$ is soluble.

$(2)$ The hypothesis holds on $G/R$ for every non-identity minimal normal subgroup $R$ of $G$. Consequently $G/R$ is supersoluble.

It is clear that ${\overline\mathcal {H}}=\{1,W_{1}R/R, W_{2}R/R, \cdots, W_{t}R/R\}$ is a complete Hall $\sigma$-set of $G/R$ and $W_{i}R/R\simeq W_{i}/W_{i}\cap R$ is nilpotent.
By (1), $R$ is an elementary abelian $p$-group for some prime $p$.
Without loss of generality, we can assume that $R\leq W_{1}$.
If $W_{1}/R$ is non-cyclic,
then $W_{1}$ is non-cyclic.
For every maximal subgroup $M/R$ of $W_{1}/R$,
we have that $M$ is a maximal subgroup of $W_{1}$.
Then by the hypothesis and Lemma \ref{main} (2),
$M/R$ is weakly $\sigma$-permutable in $G/R$.
Now assume that $W_{i}R/R$ is non-cyclic for $i\neq1$ and $M/R$ be a maximal subgroup of $W_{i}R/R$.
Then $M=(M\cap W_{i})R$.
As $W_{i}$ is nilpotent, $|W_{i}R/R:M/R|=|W_{i}R/R:(M\cap W_{i})R/R|=|W_{i}:M\cap W_{i}|$ is a prime.
Hence $M\cap W_{i}$ is a maximal subgroup of $W_{i}$.
By the hypothesis and Lemma \ref{main} (3),
$M/R=(M\cap W_{i})R/R$ is weakly $\sigma$-permutable in $G/R$.
This shows that the hypothesis holds for $G/R$.
Hence $G/R$ is supersoluble by the choice of $G$.

$(3)$ $R$ is the unique minimal normal subgroup of $G$,
$\Phi(G)=1$, $C_{G}(R)=R=F(G)=O_{p}(G)$ and $|R|>p$ for some prime $p$ (It follows from (2)).

$(4)$ For some $i \in \{1,2,\cdots,t \}$, $W_{i}$ is a $p$-group. Without loss of generality, we may assume that $W_{1}$ is a $p$-group.

Since $R$ is a $p$-group, $R\leq W_{i}$ for some $i \in \{1,2,\cdots,t \}$.
Moreover, since $C_{G}(R)=R$ and $W_{i}$ is a nilpotent group,
we have that $W_{i}$ is a $p$-group.

$(5)$ Final contradiction.

Since $ \Phi(G) =1$,  $R\nleqslant \Phi(W_{1})$ \cite[Chapter III, Lemma 3.3]{HU}.
Hence there exists a maximal subgroup $V$ of $W_{1}$
such that $W_{1}=RV$.
Let $E=R\cap V$.
Then $|R:E|=|RV:V|=|W_{1}:V|=p$.
Hence $E$ is a maximal subgroup of $R$ and $1\neq E\unlhd W_{1}$.
Since $|R|>p$ and $R\leq W_{1}$,
$W_{1}$ is non-cyclic.
Hence by the hypothesis,
there exists a $\sigma$-subnormal subgroup $T$ of $G$
such that $G=VT$ and $V\cap T\leq V_{\sigma G}$.
Since $|G:T|$ is $p$-number, $O^{p}(T) = O^{\sigma_{1}}(T)=O^{\sigma_{1}}(G)$ by Lemma \ref{subnormal} (7).
So $|G:T_{G}|$ is $p$-number.
It follows that $T_{G}\neq1$ and $R\leq T_{G}\leq T$ by (2).
Since $V_{\sigma G}$ is $\sigma$-subnormal in $G$ by Lemma \ref{permutable} (4),
we have that $V_{\sigma G}\leq O_{\sigma_{1}}(G)=O_{p}(G)=R$ by Lemma \ref{subnormal} (8).
Hence $E=R\cap V\leq T\cap V\leq V_{\sigma G}\leq R$.
But since $E$ is a maximal subgroup of $R$,
it follows that $V_{\sigma G}=R$ or $V_{\sigma G}=E$.
In the former case, we have that $R\leq V$, a contradiction.
In the later case, $E=V_{\sigma G}$ is $\sigma$-permutable in $G$ by Lemma \ref{permutable} (4)
and $E$ is a $\sigma_{1}$-group.
It follows from Lemma \ref{normalizes} that $O^{\sigma_{1}}(G)\leq N_{G}(E)$.
Hence $E\unlhd G$,
which contradicts the minimality of $R$.
The final contradiction completes the proof of the theorem.

 \section{Proof of Theorem \ref{Th4}}

Assume that the assertion is false and let $(G,E)$ be a counterexample with $|G|+|E|$ minimal.
Without loss of generality, we can assume that $W_{i}$ is a $\sigma_{i}$-group
for all $i=1, 2,\cdots, t$.
We now proceed with the proof via the following steps.

$(1)$ $E$ is supersoluble.

In fact, $\{1,W_{1}\cap E, W_{2}\cap E,\cdots, W_{t}\cap E\}$ is a complete Hall $\sigma$-set of $E$ and $W_{i}\cap E$ is nilpotent.
Consequently, $E$ is a $\sigma$-full group of Sylow type.
By Lemma \ref{main} (1) and Theorem \ref{Th3}.
Hence $E$ is supersoluble.

$(2)$ If $R$ is a minimal normal subgroup of $G$ contained in $E$,
then $R$ is a $p$-group for some prime $p$ and the hypothesis holds for $(G/R,E/R)$.
Therefore $E/R\leq Z_{\mathfrak{U}}(G/R)$.

By (1), $R$ is a $p$-group for some $p$.
Without loss of generality, we can assume that $R\leq W_{1}\cap E$.
It is clear that ${\overline\mathcal {H}}=\{1,W_{1}/R, W_{2}R/R, \cdots, W_{t}R/R\}$
is a complete Hall $\sigma$-set of $G/R$ and $W_{i}R/R\simeq W_{i}/W_{i}\cap R$ is nilpotent.
Let $M/R$ be a maximal subgroup of $(W_{1}\cap E)/R$.
Then by the hypothesis and Lemma \ref{main} (2), $M/R$ is weakly $\sigma$-permutable in $G/R$.
Now let $V/R$ be a maximal subgroup of $W_{i}R/R\cap E/R=(W_{i}\cap E)R/R$, $i=2,\cdots, t$.
Then $V=(V\cap W_{i})R$.
Since $W_{i}R/R\cap E/R$ is nilpotent, $|W_{i}\cap E:V\cap W_{i}|=|W_{i}R\cap E:(V\cap W_{i})R|=|W_{i}R/R\cap E/R:V/R|$ is a prime, so $V\cap W_{i}$ is a maximal subgroup of $W_{i}\cap E$.
Then by the hypothesis and Lemma \ref{main} (3),
$V/R=(V\cap W_{i})R/R$ is weakly $\sigma$-permutable in $G/R$, $i=2,\cdots, t$.
This shows that $(G/R,E/R)$ satisfies the hypothesis.
Thus $E/R\leq Z_{\mathfrak{U}}(G/R)$ by the choice of $(G,E)$.

$(3)$ $R$ is the unique minimal normal subgroup of $G$ contained in $E$,
$|R|>p$ and $O_{p'}(E)=1$.

Let $L$ be a minimal normal subgroup of $G$ contained in $E$ such that $R\neq L$.
Then $E/R\leq Z_{\mathfrak{U}}(G/R)$ and $E/L\leq Z_{\mathfrak{U}}(G/L)$ by (2),
and clearly, $|R|>p$.
It follows that $LR/L\leq Z_{\mathfrak{U}}(G/L)$,
so $|R|=p$ by the $G$-isomorphism $RL/L\simeq R$,
a contradiction.
Hence $R$ is the unique minimal normal subgroup of $G$ contained in $E$. Consequently, $O_{p'}(E)=1$.
Hence (3) holds.

Without loss of generality, we may assume $p \in \pi(W_{1})$.

$(4)$ $E$ is a $p$-group and so $E\cap W_{1}=E$ and $E\cap W_{i}=1$ for $i= 2,3,\cdots,t$.

Let $q$ be the largest prime dividing $|E|$ and let $Q$ be a Sylow $q$-subgroup of $E$.
Since $E$ is supersoluble by (1) (see \cite[Chapter \uppercase\expandafter{\romannumeral6}, Theorem 9.1] {HU}),
$Q$ is characteristic in $E$.
Then $Q$ is normal in $G$.
Hence by (3) we have that $q=p$ and $F(E)=Q=O_{p}(E)=P$ is a Sylow $p$-subgroup of $E$.
Thus $C_{E}(P)\leq P$ (see \cite[Theorem 1.8.18]{W1}).
But since $P\leq W_{1}\cap E$ and $W_{1}\cap E$ is nilpotent,
we have that $P= W_{1}\cap E$.
Since $P\cap W_{1}=P= W_{1}\cap E$ and $P\cap W_{i}=1$ for all $i=2,\cdots, t$, the hypothesis holds for $(G,P)$.
If $P<E$,
then $R\leq P\leq Z_{\mathfrak{U}}(G)$ by the choice of $(G,E)$.
It follows that $|R|=p$, a contradiction.
Hence $E=P$ is a $p$-group,
and so $E\leq W_{1}$.

$(5)$ $\Phi(E)=1$, so $E$ is elementary abelian $p$-group.

Assume that $\Phi(E)\neq 1$.
Then clearly, $(G/\Phi(E), E/\Phi(E))$ satisfies the hypothesis.
Hence $E/\Phi(E) \leq  Z_{\mathfrak{U}}(G/\Phi(E))$.
It follows from (4) and Lemma \ref{XC} that $E \leq  Z_{\mathfrak{U}}(G)$, a contradiction.
Thus we have (5).

$(6)$ Final contradiction.

Let $R_{1}$ be a maximal subgroup of $R$ such that $R_{1}\unlhd W_{1}$.
Then $|R_{1}| > 1$ by (3).
Claim (5) implies that $R$ has a complement $S$ in $E$.
Let $V=R_{1}S$.
Then $R\cap V=R_{1}$ and $V$ is a maximal subgroup of $E$.
Hence by (4) and the hypothesis,
there exists a $\sigma$-subnormal subgroup $T$ of $G$
such that $G=VT$ and $V\cap T\leq V_{\sigma G}$.
Then $G=VT=ET$ and $E=V(E\cap T)$.
By (5), it is easy to see that $1\neq E\cap T\unlhd G$.
Hence $R\leq E\cap T$ by (3), and so $R_{1}=R\cap V\leq E\cap T\cap V=V\cap T\leq V_{\sigma G}$.
Consequently, $R_{1}\leq V_{\sigma G}\cap R\leq R$.
It follows that $R=V_{\sigma G}\cap R$ or $R_{1}=V_{\sigma G}\cap R$.
In the former case,
$R\leq V$,  which contradicts the fact that $R_{1}=R\cap V$.
Thus $R_{1}=V_{\sigma G}\cap R$.
By Lemma \ref{permutable}(4),
we have that $V_{\sigma G}$ is $\sigma$-permutable in $G$, so $O^{\sigma_{1}}(G)\leq N_{G}(V_{\sigma G})$ by Lemma \ref{normalizes}.
Hence $O^{\sigma_{1}}(G)\leq N_{G}(V_{\sigma G}\cap R)=N_{G}(R_{1})$.
Moreover, since $R_{1}\unlhd W_{1}$, we obtain that $R_{1}\unlhd G$.
This implies that $R_{1}=1$.
The final contradiction completes the proof.


\begin{thebibliography}{s2}

\bibitem{AN1} A. N. Skiba,  On $\sigma$-subnormal and $\sigma$-permutable subgroups of finite groups, J. Algebra,
 {\bf 436} (2015), 1-16.


\bibitem{WS1} W. Guo, A. N. Skiba, On $\Pi$-permutable subgroups of finite groups, arXiv: 1606.03197.

\bibitem{WY1} Y. Wang,  C-Normality of groups and its properties, J. Algebra,
 {\bf 180} (1996), 954-965.

\bibitem{AN3} A. N. Skiba,  On weakly $s$-permutable subgroups of finite groups, J. Algebra, {\bf 315} (2007), 192-209.


\bibitem{AM} M. Assad, Finite groups with certain subgroups of Sylow subgroups compiemented, J. Algebra, {\bf 323} (2010), 1958-1965.

\bibitem{AB3} A. Ballester-Bolinches, Y.Wang, X.Guo, c-supplemented subgroups of finite groups, Glasg. Math. J, {\bf 42} (2000), 383-389.

\bibitem{AB2} A. Ballester-Bolinches, R. Esteban-Romero, M. Asaad, {\em Products of Finite Groups}, Walter de Gruyter, Berlin, New York, 2010.

\bibitem{W} W. Guo, {\em Structure Theory for Canonical Classes of Finite Groups}, Springer, 2015.

\bibitem{WS4} W. Guo, A. N. Skiba, Finite groups with generalized Ore supplement conditions for primary subgroups, J.Algebra, {\bf 432} (2015), 205-227.

\bibitem{BL} B. Li, On $\Pi$-property and $\Pi$-normality of subgroups of finite groups, J. Algebra, {\bf 334} (2011), 321-337.

 \bibitem{WH} H. Wei, On $c$-normal maximal and minimal subgroups of Sylow subgroups of
finite groups, Comm. Algebra, {\bf 29} (2001), 2193¨C2200.

\bibitem{AN2} A. N. Skiba ,  On $\sigma$-properties of finite groups \uppercase\expandafter{\romannumeral2}, Problems of Physics, Mathematics and Technics, {\bf 3}(24) (2015), 70-83.

\bibitem{Doerk} K. Doerk, T. Hawkes, {\em Finite Soluble Groups}, Walter de Gruyter, Berlin, 1992.

\bibitem{WS2} W. Guo, A. N. Skiba, Finite groups with permutable complete Wielandt set of subgroups, J. Group Theory, {\bf 18} (2015), 191-200.

\bibitem{SS} S. Srinivasan,  Two sufficient conditions for supersolvability of finite groups, Israel J.Math., {\bf 35} (1980), 210-214.

\bibitem{HU} B. Huppert, \textit{Endliche Gruppen \uppercase\expandafter{\romannumeral1}}, Springer-Verlag, Berlin, 1967.

\bibitem{LM}  L. Miao, On weakly s-permutable subgroups of finite groups, Bull. Braz. Math. Soc., {\bf 41} (2) (2010), 223¨C235.

\bibitem{SR} R. Schmidt,  \textit{Subgroups Lattices of Groups}, Walter de Gruyter, Berlin, 1994.

\bibitem{MW} M. Weinsten,  Between Nilpotent and Soluble, Polygonal Publishing House, Passaic, 1982.


\bibitem{AN4} A. N. Skiba,  A characterization of the hypercyclically embedded subgroups of finite groups, J. Pure Appl. Algebra, {\bf 215} (2011), 257-261.

\bibitem{AN5} A. N. Skiba, On two questions of L.A. Shemetkov concerning hypercyclically embeded subgroups in finite groups, J. Pure Appl. Algebra, {\bf 215} (2011), 257-261.


\bibitem{LS} L. A. Shemetkov and A. N. Skiba,  On the $\mathfrak{X}\Phi$-hypercentre of finite groups, J. Algebra,
 {\bf 322} (2009), 2106-2117.

\bibitem{WST} W.Guo, Alexander N. Skiba, X. Tang,  On Boundary Factors and Traces of Subgroups of Finite Groups, Commum. Math. Stat., {\bf 2} (2014), 349-361.



\bibitem{AM2} M. Assad, Influence of $S$-quasinormality on maximal subgroups of Sylow subgroups of Fitting subgroup of a finite group, Arch. Math., (Basel) {\bf 56} (1991), 521-527.

\bibitem{AM3} M. Assad, On maximal subgroups of finite group, Comm. Algebra
{\bf 26} (1998), 3647¨C3652.


\bibitem{CGA} X. Chen, W. Guo, A. N. Skiba, Some conditions under which a finite group belongs to a Baer-local formation, {\em Comm. Algebra}, {\bf 42} (2014), 4188-4203.


\bibitem{W1} W. Guo, \textit{The Theory of Classes of Groups}, Science Press-Kluwer Academic Publishers, Beijing-New York-Dordrecht-Boston-London, 2000.



\end{thebibliography}
\end{document}